\documentclass{article}
\usepackage[T1]{fontenc}
\usepackage[english]{babel}
\usepackage{amsmath}
\usepackage{amsthm,amssymb}
\usepackage{authblk}
\usepackage{amssymb}
\usepackage{xspace}
\usepackage{hyperref}
\usepackage{verbatim}
\usepackage{tikz}
\usepackage[procnumbered,linesnumbered,ruled,vlined]{algorithm2e}
\usepackage{tabularray}
\usepackage{cleveref}
\usepackage[textsize=footnotesize,color=green!40]{todonotes}
\usepackage{stmaryrd}
\usepackage{comment}
\usepackage{subcaption}
\usepackage{apxproof}
\usepackage[left= 3 cm,right=3 cm,top=3 cm,bottom=3 cm]{geometry}

\usetikzlibrary{shapes.misc}
\usetikzlibrary{calc}
\tikzstyle{noeud}=[circle,inner sep=2, minimum size =3 pt, line width = 1pt, draw=black, fill=white]

\newtheorem{theorem}{Theorem}

\newtheorem{proposition}[theorem]{Proposition}
\newtheorem{corollary}[theorem]{Corollary}
\newtheorem{lemma}[theorem]{Lemma}

\title{On four network monitoring parameters in graphs and their gaps}
\author[1,2]{Zin Mar Myint}
\author[1]{Avikal Srivastava}
\affil[1]{Indian Institute of Technology Dharwad, India}
\affil[2]{Polytechnic University (Kyaing Tong), Myanmar}

\date{}

\begin{document}


\maketitle

\begin{abstract}
Let \( G \) be a finite simple undirected graph. Four graph parameters related to network monitoring are the \emph{geodetic set}, \emph{edge geodetic set}, \emph{strong edge geodetic set}, and \emph{monitoring edge geodetic set}, with corresponding minimum sizes, denoted by \( g(G), eg(G), seg(G) \), and \( meg(G) \), respectively. These parameters quantify the minimum number of vertices required to monitor all vertices and edges of \( G \) under progressively stricter path-based conditions.  As established by Florent \textit{et al.}\ (CALDAM 2023), these parameters satisfy the chain of inequalities:
\(
g(G) \leq eg(G) \leq seg(G) \leq meg(G).
\) In 2025, Florent \textit{et al.}\ posed the following question: given integers \( a, b, c, d \) satisfying \( 2 \leq a \leq b \leq c \leq d \), does there exist a graph \( G \) such that
\(
g(G) = a, \quad eg(G) = b, \quad seg(G) = c, \quad \text{and} \quad meg(G) = d?
\)
They partially answered this affirmatively under three specific hypotheses and gave some constructions to support it. In this article, we first identify quadruples of values that cannot be realized by any connected graph. For all remaining admissible quadruples, we provide explicit constructions of connected graphs that realize the specified parameters. These constructions are modular and efficient, with the number of vertices and edges growing linearly with the largest parameter, providing a complete and constructive characterization of such realizable quadruples.
\end{abstract}

\noindent \textbf{Keywords:} Network monitoring, Geodetic set, Edge-geodetic set, Strong edge-geodetic set, Monitoring edge-geodetic set.

\section{Introduction}
Distance-based monitoring through graph theoretic models provide a fundamental framework for network path observability, fault detection and routing verification in network problems.
A key minimization problem in this area is to determine the minimal subset of strategically selected vertices such that they reveal the structure of shortest paths in the graph.
To capture different levels of monitoring or fault-detection capability, several vertex-based parameters have been introduced, each imposing progressively stronger requirements on how vertices or edges are covered by a selected collection of "probes" whose shortest path interactions expose structural information or detect faults in the network.

There are four parameters relevant to the context of this article, namely, the \emph{geodetic number} \( g(G) \), the \emph{edge-geodetic number} \( eg(G) \), the \emph{strong edge-geodetic number} \( seg(G) \), and the \emph{monitoring edge-geodetic number} \( meg(G) \). They play central roles in measuring the efficiency of probe placement in communication networks.

A set \(S \subseteq V(G)\) is called a \emph{geodetic set} if every vertex \(x \in V(G)\) lies on any shortest path between some pair of distinct vertices in \(S\).
The minimum cardinality of a geodetic set is the \emph{geodetic number} of \(G\), denoted by \(g(G)\). 
This concept was first introduced by Harary \emph{et al.}~\cite{Harary1993}.

An \emph{edge-geodetic set} of a graph \(G\) is a vertex subset \(S \subseteq V(G)\) such that
every edge of \(G\) lies on any shortest path between some pair of distinct vertices in \(S\).
The minimum cardinality of an edge-geodetic set is the \emph{edge-geodetic number} of \(G\),
denoted by \(eg(G)\), a notion introduced by Atici \emph{et al.}~\cite{Atici2003}.

Later, Manuel \emph{et al.}~\cite{Manuel2017} refined this notion further by defining the strong edge-geodetic set, which states that, instead of allowing all shortest
paths between each pair of selected vertices, one must designate a \emph{specific} shortest
path for each pair, and the union of these chosen paths must cover all edges of the graph.
Hence, a \emph{strong edge-geodetic set} of the graph \(G\) is a vertex subset \(S \subseteq V(G)\) together with a choice
of a specific shortest path \(P_{uv}\) for each unordered pair of distinct vertices \(u,v \in S\),
such that every edge of \(G\) lies on at least one of these chosen paths. The minimum cardinality of such a set is the \emph{strong edge-geodetic number}, denoted \(seg(G)\).

The most recent and most restrictive parameter in this family is the
\emph{monitoring edge-geodetic set} (MEG-set), introduced by Foucaud
\emph{et al.}~\cite{foucaud2023monitoring} in the context of fault detection.  
Here, the requirement is that every edge must be monitored by some pair of vertices in $S\subseteq V(G)$ where
a pair $u,v \in S$ \emph{monitors} an edge $e$ if $e$ lies on every shortest $u$--$v$ path in $G$. The minimum cardinality of such a set is the \emph{monitoring edge-geodetic number} of \(G\),
denoted \(meg(G)\).

These four parameters are known ~\cite{foucaud2025} to hold the following relation \[
g(G) \leq eg(G) \leq seg(G) \leq meg(G),
\]
and each corresponds to increasingly stringent monitoring conditions capturing a progressively stronger notion of coverage. Moreover, the relationships among these four \( g(G), eg(G), seg(G) \), and \( meg(G) \) were analyzed in detail in
Foucaud \emph{et al.}~\cite{foucaud2025}, where the authors initiated a systematic study of the
possible differences between them. 
A central problem posed in that work is the following:
\begin{quote}
    In \cite{foucaud2025}, given integers $2 \le a \le b \le c \le d$, does there exist a connected graph $G$ such that
    \(g(G)=a,\ eg(G)=b,\ seg(G)=c,\ meg(G)=d \, ?\) 
\end{quote}
In the same paper, the authors constructed graphs showing that nearly all quadruples 
\( 4 \leq a \leq b \leq c \leq d \) are realizable, subject to a small number of exceptional configurations. Their constructions rely on bipartite base graphs with pendent attachments and carefully arranged subdivisions, using interactions among simplicial vertices, creating the clique and twin structures to control the parameter values. However, some specific cases, particularly those involving smaller values or tight equalities between consecutive parameters (for instance, when \( d = c + 1 \) with \( b = a + 1 \)), were excluded by the constraints of the original construction. Moreover, the structural behavior of the resulting graphs suggests that further refinement and generalization of the construction might bridge these exceptional gaps and lead to a complete characterization for all positive integers \( 2 \leq a \leq b \leq c \leq d \).

\subsection{Our contributions}
In this article, we resolve the realizability problem completely.  
We introduce a modified and structurally flexible construction that produces, for \emph{every} integer quadruple \(
    2 \le a \le b \le c \le d, \)
a connected graph $G$ satisfying
$g(G)=a$, $eg(G)=b$, $seg(G)=c$, and $meg(G)=d$.
Our approach refines the building of construction that was used in ~\cite{foucaud2025}, allowing for precise
control over the roles of simplicial vertices, twins, pendant attachments, and subdivided
paths, and eliminating all previously unresolved cases.  
This establishes the first complete description of the achievable combinations of the four
monitoring-related parameters.

\begin{enumerate}
    \item [(i)] Section~\ref{sec:prelim} recalls the corresponding definitions and known results that will be used throughout the article, 
including structural lemmas for vertices that must belong to every monitoring edge-geodetic set.
\item [(ii)] Section~\ref{main_results} is divided into four parts:
\begin{enumerate}
    \item \Cref{eliminating} shows that there does not exist any connected graph $G_{a,b,c,d}$ for some specific parameters,
    \item \Cref{subsec:23-realization} provides modified constructions which prove the realizability of $g(G)=2$ and $eg(G)=3$. 
    
\item \Cref{subsec:g2-general} extends the general realization for $g(G)=2$ and $eg(G)\ge 4$, 
and 
\item \Cref{subsec:general-construction} generalizes the constructions to arbitrary parameter quadruples 
and analyze the computational complexity of our construction, showing that 
    $G_{a,b,c,d}$ can be built in $O(d)$ time, with the number of vertices and edges growing linearly with the largest parameter $d$.
\end{enumerate}

\item [(iii)] \Cref{sec:conclusion} concludes the article with open directions for future research.    
\end{enumerate}




\section{Preliminaries and known results}
\label{sec:prelim}

Throughout this article, all graphs are finite, simple, and connected.
For a graph $G$, we denote its vertex set by $V(G)$ and its edge set by $E(G)$.

A vertex \(v \in V(G)\) is \emph{simplicial} if its neighborhood induces a clique in \(G\), and it is
called \emph{pendent} if \(\deg_G(v)=1\).  
A vertex \(v\) is a \emph{cut vertex} if the graph \(G \setminus \{v\}\) is disconnected.  
A \emph{clique} of \(G\) is an induced subgraph that is a complete graph.

\begin{lemma}[\cite{foucaud2023monitoring}]\label{cut vertex}
    Let $G$ be a graph with a cut-vertex $u$. Then $u$ is never part of any minimal MEG-set of $G$.
\end{lemma}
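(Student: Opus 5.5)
The argument is by contradiction: take a minimal MEG-set $S$ of $G$ containing the cut-vertex $u$, and show that $S\setminus\{u\}$ is still an MEG-set. That contradicts minimality. We may assume $|S|\ge 2$, since an MEG-set has at least two vertices whenever $G$ has an edge. The basic tool is the standard separation property of a cut vertex. Let $C_1,\dots,C_k$ ($k\ge 2$) be the components of $G-u$. For $x\in C_i$ and $y\in C_j$ with $i\neq j$, every $x$--$y$ path passes through $u$. Hence $d(x,y)=d(x,u)+d(u,y)$, and the shortest $x$--$y$ paths are exactly the concatenations of a shortest $x$--$u$ path with a shortest $u$--$y$ path.

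\textbf{Step 1: every component of $G-u$ contains a vertex of $S\setminus\{u\}$.} Suppose some $C_i$ contains no vertex of $S$. Pick an edge $e$ with both ends in $C_i\cup\{u\}$ (one exists because $G$ is connected). Any pair $x,y\in S$ lies in $V(G)\setminus C_i$. A shortest $x$--$y$ path never enters $C_i$: entering and leaving $C_i$ would force it to visit $u$ twice. So no pair of $S$ covers $e$, contradicting that $S$ is an MEG-set.

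\textbf{Step 2: replacing $u$.} Take any edge $e$ monitored by a pair $\{u,x\}$ with $x\in S$, say $x\in C_i$. By Step 1, choose $y\in S\cap C_j$ for some $j\neq i$. Because of the concatenation structure, the set of shortest $x$--$y$ paths is the set of pairs (shortest $x$--$u$ path, shortest $u$--$y$ path). Edge $e$ lies on every shortest $x$--$u$ path, so it lies on every shortest $x$--$y$ path, and $\{x,y\}$ monitors $e$. Edges monitored by pairs not involving $u$ are unaffected. Therefore $S\setminus\{u\}$ is an MEG-set, contradicting the minimality of $S$.

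The only delicate point is Step 1. There we need that a shortest path between two vertices outside $C_i$ cannot pass through $C_i$. This holds because such a path would have to enter and exit $C_i$ through the single attachment vertex $u$, repeating $u$. The rest is the routine concatenation identity for distances through a cut vertex.
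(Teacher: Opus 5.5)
The paper does not prove this lemma; it is quoted from \cite{foucaud2023monitoring}, so there is no in-paper argument to compare with. Your proof is correct and is the standard argument for this fact.

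Step 1 holds: if some component of $G-u$ contained no vertex of $S$, the edges between $u$ and that component would lie on no shortest path between vertices of $S$. Step 2 holds because, for $x\in C_i$ and $y\in C_j$ with $i\neq j$, the shortest $x$--$y$ paths are exactly concatenations of shortest $x$--$u$ and $u$--$y$ paths, so the pair $\{x,y\}$ monitors every edge that $\{u,x\}$ monitors.

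Your argument in fact shows that $S\setminus\{u\}$ is an MEG-set whenever $S$ is an MEG-set containing $u$. So it covers both readings of ``minimal'': inclusion-wise minimal and minimum.
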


\begin{lemma}[\cite{foucaud2023monitoring}]\label{simplicial2023}
In a connected graph $G$ with at least one edge, any simplicial vertex belongs to any edge-geodetic set and thus, to any MEG-set of $G$.
\end{lemma}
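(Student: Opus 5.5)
We prove \Cref{simplicial2023} by contradiction. Suppose $S$ is an edge-geodetic set of $G$ and let $v$ be a simplicial vertex with $v\notin S$. Since $G$ is connected and has at least one edge, $v$ has a neighbour $w$, so the edge $vw$ exists. By the definition of an edge-geodetic set, $vw$ lies on some shortest $x$--$y$ path $P$ with $x,y\in S$ distinct. Because $v\notin S$, the vertex $v$ is not an endpoint of $P$. Hence $v$ is an internal vertex of $P$, and its two neighbours on $P$, say $u_1$ and $u_2$ (one of which is $w$), are distinct vertices of $N(v)$.

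The key step uses simpliciality. Since $N(v)$ induces a clique, $u_1u_2\in E(G)$. Replace the subpath $u_1 v u_2$ of $P$ by the single edge $u_1u_2$. This gives an $x$--$y$ walk of length $|P|-1$, so $d(x,y)\le |P|-1$. This contradicts the fact that $P$ is a shortest path. Therefore $v\in S$, and $v$ lies in every edge-geodetic set.

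For the second part, note that every MEG-set is an edge-geodetic set. Indeed, if a pair $x,y$ monitors an edge $e$, then $e$ lies on every shortest $x$--$y$ path. Since at least one such path exists, $e$ lies on some shortest path between two vertices of $S$. So every MEG-set is edge-geodetic, and by the first part it contains $v$.

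The only point needing care is that $v$ is genuinely internal with two distinct neighbours on $P$. This holds because $v\notin S$ forces $v\ne x,y$, and vertices on a path are distinct. The degenerate case where $v$ is isolated is excluded by connectivity together with the assumption that $G$ has an edge.
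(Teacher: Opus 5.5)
Your proof is correct. The paper gives no proof of this lemma and simply imports it from \cite{foucaud2023monitoring}. Your argument is the standard one: a simplicial vertex that is internal to a geodesic would let its two path-neighbours, which are adjacent, shorten the path, so it must be an endpoint. Combined with your observation that every MEG-set is edge-geodetic, this gives the full statement.
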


It was shown in~\cite{foucaud2023monitoring} that this implies that all pendant vertices of a graph \(G\) will be part of any geodetic set, edge-geodetic set, strong edge-geodetic
set, and monitoring edge-geodetic set by \cref{simplicial2023}. 

Two distinct vertices $u,v$ are \emph{open twins} if $N(u)=N(v)$, and \emph{closed twins} if
$N[u]=N[v]$.  
In this work, we simply refer to them as \emph{twins}. Twins behave rigidly with respect to monitoring.

\begin{lemma}[{\cite{foucaud2023monitoring}}]
\label{lem:twins}
Every pair of (open or closed) twins of degree at least one belongs to every MEG-set.
\end{lemma}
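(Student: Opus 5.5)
The plan is to argue by contradiction from the definition of monitoring. Let $u,v$ be twins of degree at least one, and let $S$ be a MEG-set of $G$. Suppose, say, $u\notin S$. We will find an edge incident to $u$ that no pair of vertices of $S$ monitors.

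Since $\deg(u)\ge 1$, fix a neighbour $w$ of $u$. If $u,v$ are open twins, then $w\neq v$ and $w\in N(v)$. If they are closed twins, then either $w=v$, or $w\in N(v)\setminus\{u\}$. We look at the edge $uw$ and take any pair $x,y\in S$ with $uw$ on every shortest $x$--$y$ path $P$. Since $u\notin S$, the vertex $u$ is an internal vertex of $P$. So $P$ contains a subpath $a\,u\,b$ with $a,b\in N(u)\setminus\{v\}$ (possibly $w\in\{a,b\}$), unless $v$ itself is adjacent to $u$ on $P$.

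\textbf{Main case.} Suppose $v\notin\{a,b\}$. Then $a,b\in N(v)$ by the twin property. Also $v\notin V(P)$: otherwise $v$ and $u$ would both lie on $P$ within distance $2$ of each other through common neighbours, which contradicts $P$ being a geodesic. Replacing $u$ by $v$ in $P$ gives another walk $a\,v\,b$ of the same length, hence another shortest $x$--$y$ path, and it avoids $uw$. This contradicts the assumption that $x,y$ monitor $uw$.

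\textbf{Closed-twin subcase.} Suppose $P$ uses the edge $uv$. Then $u$ is internal, with other neighbour $a$ on $P$, and $a$ is adjacent to $v$ as well. This makes $a\,u\,v$ non-geodesic, since $av$ is an edge. Hence $P$ never passes through $uv$ in this way, and when $x$ or $y$ equals $v$ we handle it directly.

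It therefore remains only to treat the case where $v\in S$ is an endpoint of $P$, say $x=v$. If $P$ starts $v,c,u,\dots$, then $u$ and $v$ have the common neighbour $c$. For open twins, $d(v,\cdot)=d(u,\cdot)$ on all vertices other than $u,v$, so a geodesic from $v$ never passes through $u$ as an internal vertex beyond distance $2$; at distance $2$ it would end at $u=y$, contradicting $u\notin S$. For closed twins, $u$ being internal would contradict the edge $vu$ shortcut. Hence every edge $uw$ would be unmonitored, a contradiction, so $u\in S$, and symmetrically $v\in S$.

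The main obstacle is the bookkeeping of these degenerate configurations, where one of $u,v$ is an endpoint of the path or the path uses the edge $uv$. The swap argument in the main case is routine.
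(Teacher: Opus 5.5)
Your swap argument is correct and is essentially the standard proof from the cited source; the paper gives no proof of its own and only imports the lemma from Foucaud et al. In both, the closed-twin shortcut rules out $v\in\{a,b\}$, and replacing $u$ by $v$ in $a\,u\,b$ gives another geodesic avoiding $uw$. Your final paragraph on $v$ being an endpoint is redundant, since the main-case fact $v\notin V(P)$ already covers $v\in\{x,y\}$; the cleanest justification is that if $v$ lay on $P$, the substituted walk would repeat $v$ and so contain a shorter $x$--$y$ walk.
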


An important structural link between MEG-sets and strong edge-geodetic sets was established
in~\cite{foucaud2025}.

\begin{proposition}[\cite{foucaud2025}]\label{segiffmeg}
    Let $S \subseteq V(G)$ be a vertex subset of a graph $G$ and let $f$ be an assignment of a shortest path to each pair of
vertices of $S$. Then $S$ is an MEG-set if and only if $S$, along with the assignment $f$, is a strong edge-geodetic set for any choice of $f$.
\end{proposition}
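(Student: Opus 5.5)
This follows directly from the definitions, arguing each direction separately. Fix $S$ and an assignment $f$ that picks, for each unordered pair $\{u,v\}\subseteq S$, a shortest $u$--$v$ path $f(u,v)$.

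For the forward direction, assume $S$ is an MEG-set and let $f$ be arbitrary. Take any edge $e\in E(G)$. By the definition of an MEG-set there is a pair $u,v\in S$ that monitors $e$, so $e$ lies on every shortest $u$--$v$ path. In particular $e$ lies on $f(u,v)$. Since $e$ and $f$ were arbitrary, every edge is covered by the chosen paths, so $(S,f)$ is a strong edge-geodetic set for every $f$.

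For the converse, argue by contrapositive. Suppose $S$ is not an MEG-set. Then some edge $e$ is monitored by no pair of $S$. So for each unordered pair $\{u,v\}\subseteq S$ there is a shortest $u$--$v$ path $P_{uv}$ that avoids $e$. (If $u=v$ were allowed it would be vacuous; pairs are of distinct vertices, and since $G$ is connected a shortest path always exists.) Define $f(u,v)=P_{uv}$. Under this assignment no chosen path contains $e$, so $(S,f)$ is not a strong edge-geodetic set. This contradicts the hypothesis that $(S,f)$ is a strong edge-geodetic set for every choice of $f$.

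The only point needing care is that the chosen avoiding paths can be assembled into a single assignment. This is immediate, because $f$ is defined independently on each pair and $S$ is finite. There is no substantive obstacle: both directions are just the two quantifier orders, "for all shortest paths" versus "for all choices of one shortest path per pair", which are equivalent because choices on different pairs are independent.
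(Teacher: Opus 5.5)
Your proof is correct: the forward direction uses that an edge monitored by a pair $u,v$ lies on every shortest $u$--$v$ path and hence on $f(u,v)$, and the converse collects, for each pair, a shortest path avoiding an unmonitored edge into one assignment $f$ under which that edge is uncovered. The paper gives no proof of this proposition (it imports it from \cite{foucaud2025}), and your direct unpacking of the two quantifier orders is the standard definitional argument behind it.
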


\begin{theorem}[\cite{foucaud2025}]\label{4-cycle}
Let $G$ be a graph. A vertex $v \in V(G)$ is in every MEG-set of $G$ if and only if there exists $u \in N(v)$ such that for any vertex $x \in N(v)$, any induced 2-path $uvx$ is part of a 4-cycle.
\end{theorem}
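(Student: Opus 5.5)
The plan is to reduce membership in every MEG-set to a single test set. Monitoring is monotone: if a pair $a,b\in S$ monitors an edge, the same pair still monitors it in any $S'\supseteq S$. So supersets of MEG-sets are MEG-sets, and $v$ lies in every MEG-set if and only if $V(G)\setminus\{v\}$ is \emph{not} an MEG-set. That is, $v$ is forced exactly when some edge of $G$ is monitored by no pair of vertices distinct from $v$. I would then prove the two directions separately.

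\emph{($\Leftarrow$)} Assume $u\in N(v)$ has the stated property. I claim the edge $uv$ is monitored by no pair $a,b$ with $a,b\neq v$.
\begin{enumerate}
\item Let $P$ be any shortest $a$--$b$ path containing $uv$. Since $v\notin\{a,b\}$, $v$ is an internal vertex of $P$, so $P$ contains a segment $u\,v\,x$ with $x\in N(v)$ and $x\neq u$.
\item $u$ and $x$ are non-adjacent, since otherwise $P$ could be shortcut, contradicting that it is shortest. Hence $uvx$ is an induced 2-path.
\item By hypothesis $uvx$ lies on a 4-cycle $u\,v\,x\,w$ with $w\neq v$ and $w$ adjacent to both $u$ and $x$.
\item Replacing $v$ by $w$ in $P$ gives another $a$--$b$ path of the same length. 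It is therefore also shortest, and it avoids the edge $uv$. Since $w$ is adjacent to $u$ and $x$ but $w\notin\{u,x\}$, and the detour replaces a segment of a shortest path, this is a genuine shortest path.
\end{enumerate}
Thus no pair in $V(G)\setminus\{v\}$ monitors $uv$, and $v$ belongs to every MEG-set. The pendant case $\deg(v)=1$ is covered automatically: the condition holds vacuously for the unique neighbour $u$, and indeed no path through the pendant edge has $v$ as an interior vertex.

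\emph{($\Rightarrow$)} I would argue by contraposition. Suppose that for every $u\in N(v)$ there is some $x_u\in N(v)$ with $uvx_u$ an induced 2-path lying on no 4-cycle; equivalently, $v$ is the unique common neighbour of $u$ and $x_u$. I show that $S=V(G)\setminus\{v\}$ is an MEG-set.
\begin{enumerate}
\item An edge $yz$ not incident to $v$ is monitored by the pair $\{y,z\}\subseteq S$, since the only shortest $y$--$z$ path is the edge itself.
\item An edge $uv$ is handled by the pair $\{u,x_u\}\subseteq S$. We have $d(u,x_u)=2$, because the path $u\,v\,x_u$ is induced. Since $v$ is the only common neighbour of $u$ and $x_u$, the path $u\,v\,x_u$ is the unique shortest $u$--$x_u$ path, so it contains $uv$ and the pair monitors that edge.
\end{enumerate}
Hence $v$ is not in every MEG-set.

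The only delicate points are the bookkeeping in the ($\Leftarrow$) direction, namely ensuring $x\neq u$ and that the detour through $w$ preserves length, and the degenerate small-degree cases. I expect neither to be a real obstacle once the problem is reduced, via monotonicity, to the single set $V(G)\setminus\{v\}$.
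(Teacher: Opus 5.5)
Your proof is correct. The paper does not prove this statement itself; it quotes it from \cite{foucaud2025}. Your argument is essentially the standard proof of this characterization: reduce by monotonicity to whether $V(G)\setminus\{v\}$ is an MEG-set, then use the detour $u\,w\,x$ around $v$ for ($\Leftarrow$) and the unique shortest path $u\,v\,x_u$ for ($\Rightarrow$).

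The only step worth phrasing more cleanly is the end of ($\Leftarrow$). Simply note that an $a$--$b$ walk of length $d(a,b)$ is automatically a path; equivalently, $w$ cannot already lie on the geodesic $P$, since it is adjacent to both $u$ and $x$. The new path avoids $uv$ because $v$ occurs only once on $P$.
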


Building on these structural observations, the next section first determines 
which parameter quadruples can occur in connected graphs, and then develops a 
refined construction that overcomes all remaining limitations.  
In particular, we show that every quadruple 
$2 \le a \le b \le c \le d$ is realizable.



\section{Main Results}\label{main_results}

The structural observations collected in Section~\ref{sec:prelim} impose 
strong restrictions on how the four parameters 
\(g(G)\), \(eg(G)\), \(seg(G)\), and \(meg(G)\) may interact.  
Before presenting our constructions, we begin to use these constraints to eliminate several parameter quadruples 
that any connected graph cannot realize.  
These results form the first step toward a full characterization.

\subsection{Eliminating impossible parameter quadruples}\label{eliminating}
We first identify certain parameter combinations that cannot be realized, 
clarifying structural limitations imposed by the preliminary results.

\begin{theorem}    \label{thm:no_222d}
For any integer \(d>2\), there does not exist a connected graph \(G\) satisfying
\[
g(G) = eg(G) = seg(G) = 2 \quad \text{and} \quad meg(G) = d.
\]
\end{theorem}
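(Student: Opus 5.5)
Our plan is to show that $seg(G)=2$ forces $G$ to be a path, and then to use the fact that $meg$ of a path equals $2$.

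Suppose $S=\{u,v\}$ is a strong edge-geodetic set of size $2$. Then there is exactly one pair of vertices in $S$, so exactly one chosen shortest path $P_{uv}$. By definition, every edge of $G$ lies on $P_{uv}$, so $E(G)\subseteq E(P_{uv})$. Since $G$ is connected, every vertex has an incident edge (or $G=K_1$, which is excluded because $g\ge 2$). Hence every vertex lies on $P_{uv}$, and $G$ equals the path $P_{uv}$ as a graph.

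It remains to check that this path really is all of $G$. Any edge of $G$ not on $P_{uv}$ would be uncovered, contradicting the choice of $S$. Therefore $G$ is a path $P_n$ with $n\ge 2$ and endpoints $u$ and $v$.

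In a path there is a unique shortest path between any two vertices. So the pair $u,v$ monitors every edge: each edge lies on every (the only) shortest $u$--$v$ path. Thus $\{u,v\}$ is an MEG-set, giving $meg(G)\le 2$.

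For the lower bound, the chain $g(G)\le eg(G)\le seg(G)\le meg(G)$ gives $meg(G)\ge seg(G)=2$. Alternatively, both endpoints are pendant and hence lie in every MEG-set by \cref{simplicial2023}.

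Hence $meg(G)=2$, contradicting $d>2$. The only point needing care is the deduction that $G=P_{uv}$, which relies on $G$ being connected and having no edges outside the single designated path. No real obstacle is expected.
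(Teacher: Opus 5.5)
Your proof is correct and takes essentially the same route as the paper: a strong edge-geodetic set of size $2$ has a single designated shortest path $P_{uv}$ that contains every edge, so connectivity forces $G=P_{uv}$, and then $meg(G)=2$. Your version is slightly tidier, since you use only $seg(G)=2$ together with connectivity to get $V(G)=V(P_{uv})$. The paper instead also invokes $g(G)=eg(G)=2$ and detours through uniqueness of the shortest $u$--$v$ path, but the core argument is the same.
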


\begin{proof}
Assume, for the sake of contradiction, that such a connected graph \(G\) exists.  
Since \(g(G) = 2\), there must exist a pair of vertices \(u,v\) such that every vertex of \(G\) lies on some shortest \(u\text{--}v\) path.

Now, \(eg(G)=2\) implies that every edge of \(G\) also lies on a shortest path between \(u\) and \(v\).

Since \(seg(G)=2\), there exists a set \(S=\{u,v\}\subseteq V(G)\) and an assignment that selects a particular shortest \(u\!-\!v\) path \(P_{uv}\) such that every edge of \(G\) lies on the chosen path \(P_{uv}\). In other words, the set of edges of \(G\) is contained in the edge-set of \(P_{uv}\); hence every edge of \(G\) is an edge of \(P_{uv}\).

Consequently, there cannot exist any shortest \(u\!-\!v\) path different from \(P_{uv}\): any alternative shortest path would use an edge not in \(P_{uv}\), contradicting that all edges of \(G\) lie on \(P_{uv}\). Thus \(P_{uv}\) is the unique shortest \(u\!-\!v\) path in \(G\). As there is a unique shortest $u$-$v$ path in $G$ and this path covers all the vertices and edges of $G$, the graph $G$ is precisely $P_{uv}$. Hence, $meg(G)=meg(P_{uv})=2$, contradicting the assumption $meg(G)=d>2$.

This contradiction shows that no such graph \(G\) exists. 
\end{proof}
We now exclude another set of parameter quadruples that are incompatible, specifically those with \(eg(G) = seg(G) = 3\).

\begin{theorem}\label{thm:no_233d}
There does not exist a connected graph $G$ with parameters
\[
g(G)=2, \quad eg(G)=3, \quad seg(G)=3, \quad meg(G)=d \ge 3.
\]
In other words, $G_{(2,3,3,d)}$ graphs do not exist.
\end{theorem}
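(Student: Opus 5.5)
The plan is to argue by contradiction and to exploit the rigid "interval" structure that $g(G)=2$ forces. Fix a geodetic pair $\{u,v\}$ with $D=d(u,v)$, and partition $V(G)$ into layers $L_i=\{w : d(u,w)=i\}$, $0\le i\le D$. Every vertex lies on a shortest $u$--$v$ path, so $d(u,w)+d(w,v)=D$ for all $w$. From this I will first show that every edge $ab$ with $a\in L_i$, $b\in L_{i+1}$ lies on a shortest $u$--$v$ path: concatenate a geodesic $u\to a$, the edge $ab$, and a geodesic $b\to v$, which has length $i+1+(D-i-1)=D$. Since an edge cannot join layers more than one apart, the only edges \emph{not} covered by $u$--$v$ geodesics are \emph{horizontal} edges, with both ends in the same layer $L_i$. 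Because $eg(G)=3>2$, at least one horizontal edge $ab\subseteq L_i$ exists, and necessarily $1\le i\le D-1$.

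Next I use $seg(G)=3$. Fix a strong edge-geodetic set $S=\{x,y,z\}$ with chosen geodesics $P_{xy},P_{yz},P_{xz}$ whose union is all of $E(G)$. In particular $G$ is the union of three geodesics forming a geodesic triangle. The edges at $u$ and at $v$ must be covered, so $u$ and $v$ lie on these paths. The horizontal edge $ab$ also lies on one of them, say $P_{xy}$.

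The key step is to show that this configuration forces either a fourth required vertex or a contradiction with $d(u,w)+d(w,v)=D$. I will first try to prove $\{u,v\}\subseteq S$. If $u\notin S$, then $u$ is an interior vertex of some chosen geodesic, which enters and leaves $u$ through two distinct neighbours in $L_1$. The layer function then goes $1,0,1$ along that path. I will then try to show that this can be shortcut using the interval property together with the edges that the three paths are forced to contain, so the path would not be a geodesic. Once $u,v\in S$, say $S=\{u,v,z\}$, the path $P_{uv}$ is a $u$--$v$ geodesic and contains no horizontal edge. The horizontal edge $ab$ must therefore lie on $P_{uz}$ or $P_{vz}$. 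A geodesic from $u$ increases the layer index by exactly one per step, so $P_{uz}$ has no horizontal edge either. By symmetry, using the layers measured from $v$, neither does $P_{vz}$. This contradicts the existence of $ab$, giving $eg(G)=2$ and hence the desired contradiction.

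The main obstacle is the reduction $\{u,v\}\subseteq S$. If it fails directly, I would instead consider the layer profile of each of the three paths. A geodesic between $x\in L_p$ and $y\in L_q$ has length at most $\min(p+q,\,2D-p-q)$. It must contain a horizontal step and reach both layer $0$ and layer $D$ if it passes through $u$ or $v$. I would then do a case analysis on which paths contain $u$, $v$, and $ab$, and derive a length contradiction from $d(x,y)\le$ (the length through a better layer route).
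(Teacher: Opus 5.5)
\textbf{There is a genuine gap: the reduction to $\{u,v\}\subseteq S$ cannot be carried out as planned, and the fallback does not supply the missing contradiction.}

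Your layering set-up is correct and a good framework. Every edge from $L_i$ to $L_{i+1}$ lies on a $u$--$v$ geodesic, so the edges missed by $\{u,v\}$ are exactly the horizontal ones. A geodesic starting at $u$ (or at $v$) changes layer by exactly one per step, and this settles the case $\{u,v\}\subseteq S$.

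The problem is the reduction to that case. Your shortcut argument cannot work: a chosen geodesic may legitimately pass through $u$ with layer profile $1,0,1$. For example, in $C_6$ with $u=0$, $v=3$, the path $1\,0\,5$ is a geodesic. In fact the analysis points the other way:
\begin{itemize}
\item Splitting a geodesic at $u$ or $v$, your endgame shows that \emph{no} geodesic passing through $u$ or $v$ contains a horizontal edge.
\item Hence the path carrying $ab$, say $P_{yz}$, avoids $u$ and $v$, so $u,v\notin\{y,z\}$.
\item Also $x\neq u$. Otherwise $P_{uy},P_{uz}$ are layer-increasing and can meet $v$ only as an endpoint, leaving the edges at $v$ uncovered. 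Symmetrically $x\neq v$.
\end{itemize}
So in a putative counterexample $u,v\notin S$, and the contradiction must be found in exactly the case you were trying to exclude.

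Your fallback does not reach it. Take $D=4$, $x\in L_2$, $y\in L_1$, $z\in L_3$, with $P_{xy}$ running $x\to L_1\to u\to y$, $P_{xz}$ running $x\to L_3\to v\to z$, and $P_{yz}$ with layer profile $1,2,2,3$. Every length bound of the type $d\le\min(p+q,2D-p-q)$ is satisfied by this pattern.

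What is missing is a covering argument for vertical edges. Every vertex of $L_k$ with $1\le k\le D-1$ has a neighbour in $L_{k-1}$ and one in $L_{k+1}$, and those edges must be covered as well. The argument then runs as follows.
\begin{itemize}
\item Each of $P_{xy},P_{xz}$ contains at most one of $u,v$. If $P_{xy}$ met $u$ and then $v$, then $d(x,u)+D=d(x,v)=D-d(x,u)$, forcing $x=u$; the other order is symmetric. So without loss of generality $u\in P_{xy}$ and $v\in P_{xz}$.
\item The $x$-halves of these two paths (from $x$ to $u$, and from $x$ to $v$) concatenate into a $u$--$v$ geodesic $Q$ with exactly one vertex per layer.
\item $y\notin P_{xz}$: on the $x$--$v$ half it would give $d(x,y)=q-p$ instead of $p+q$, and on the $v$--$z$ half $P_{yz}$ would be strictly descending. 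Likewise $z\notin P_{xy}$.
\item So the only edges at $y$ are a down-edge on $P_{xy}$ and the first edge of $P_{yz}$. Hence $P_{yz}$ must leave $y$ upward, and symmetrically it enters $z$ from below.
\item Consequently both ends of a horizontal edge of $P_{yz}$ are interior vertices of $P_{yz}$ that lack a vertical edge on it, so both lie on $P_{xy}\cup P_{xz}$.
\item The first- and last-step directions rule out the $u$--$y$ and $v$--$z$ halves. For instance, a vertex $w\in L_k$ on the $u$--$y$ half would force the $y$--$w$ part of $P_{yz}$ to descend strictly.
\item So both ends lie on $Q$ in the same layer, which is impossible.
\end{itemize}

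The paper argues differently. It asserts that both ends of a missed edge belong to the edge-geodetic set and then reduces $G$ to a cycle. That step is asserted without justification there, so it does not give you the missing piece. Your layer framework is the better starting point once this covering argument is added.
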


\begin{proof}
Since $g(G)=2$, let $\{u,v\}$ be a geodetic set of $G$.  
Then every vertex of $G$ lies on some shortest $u$--$v$ path.

However, since $eg(G)=3$, the pair $\{u,v\}$ does not cover all edges.  
Therefore, there exists at least one edge $xy\in E(G)$ which does not lie on any shortest 
$u$--$v$ path.   
We observe that each of $x$ and $y$ appears on at least one shortest $u$--$v$ path, 
but the edge $xy$ never appears on any such path.
Since $eg(G)=3$ and $xy$ is not contained in any shortest $u$--$v$ path, both $x$ and $y$ must be included in the edge-geodetic set to 
cover the edge $xy$. 

Due to this argument, $x$ and $y$ must be included in the strong edge-geodetic set as well.
Since $seg(G)=3$, let us say $S=\{x,y,w\}$ which covers 
all edges.
By definition of the strong edge-geodetic number, there exist specific shortest paths
\(
P_{xy}, \quad P_{xw}, \quad P_{yw}
\)
whose union covers all edges of $G$. This structure forces the union 
\[
P_{xw} \cup P_{yw} \cup \{xy\}
\]
to contain at least one cycle. Note that there is no pendent vertex because every pandent vertex belongs to the intersection of all $4$ parameters. Hence, $u,v,x,y,w$ are not pendent vertices.

In particular, the edge $xy$ lies on $P_{xy}$, 
but $P_{xy}$ does not intersect any shortest $u$--$v$ path. At the same time, $P_{xw}$ and $P_{yw}$ must cover all remaining edges and pass through all vertices of the graph $G$. 
Due to the definition of a strong edge-geodetic set, there are unique shortest paths between $x$--$w$ and $y$--$w$. Without loss of generality, let there be two shortest paths between $x$ and $w$, then $S$ cannot cover all edges of the graph $G$. Hence, there is a unique shortest path between $x$--$w$ and $y$--$w$. Thus, $G$ is the form as $xy\cdots w\cdots x$ and it is a cycle in which every edge of the cycle must lie on some shortest $u$--$v$ path.  
But the cycle contains the edge $xy$, which by assumption lies on \emph{no} shortest 
$u$--$v$ path. Therefore, it is a contradiction that no matter whether even or odd cycle, since $g(G)\neq eg(G)$.

This is impossible. Therefore, no such graph $G$ exists. 
\end{proof}

Having identified the parameter quadruples with $g(G)=2$ and $eg(G)=3$ that cannot be realized,
we now turn to the complementary task of establishing existence results.

\subsection{Realizing the case \texorpdfstring{$g(G)=2$ and $eg(G)=3$}{g(G)=2 and eg(G)=3}}\label{subsec:23-realization}

In this subsection, we construct explicit families of connected graphs
realizing all remaining admissible quadruples with
\[
g(G)=2 \quad \text{and} \quad eg(G)=3,
\]
thereby completing the analysis for this parameter $2<3< c\leq d$.

\begin{lemma}\label{lem:eg-implies-seg}
Let $G$ be a connected graph and let $u \in V(G)$.
If $u$ belongs to every edge-geodetic set of $G$, then
$u$ belongs to every strong edge-geodetic set of $G$.
\end{lemma}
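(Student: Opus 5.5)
Because every strong edge-geodetic set is itself an edge-geodetic set, the statement follows almost at once. We first record this inclusion explicitly. Let $S\subseteq V(G)$, together with an assignment $f$ of a specific shortest path $P_{xy}=f(\{x,y\})$ to each unordered pair $\{x,y\}\subseteq S$, be a strong edge-geodetic set of $G$. By definition, every edge $e\in E(G)$ lies on some chosen path $P_{xy}$. Each $P_{xy}$ is a shortest $x$--$y$ path, so $e$ lies on a shortest path between the distinct vertices $x,y\in S$. This is exactly the condition for $S$ to be an edge-geodetic set. Hence the family of strong edge-geodetic sets is contained in the family of edge-geodetic sets. The same observation gives $eg(G)\le seg(G)$.

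With this inclusion the lemma is immediate. Suppose $u$ belongs to every edge-geodetic set of $G$. Let $S$ be an arbitrary strong edge-geodetic set, with any valid assignment. By the previous paragraph, $S$ is an edge-geodetic set, so by hypothesis $u\in S$. Since $S$ was arbitrary, $u$ lies in every strong edge-geodetic set.

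The only possible subtlety is quantification. One might worry that the phrase ``every edge-geodetic set'' should be read as ``every minimum edge-geodetic set.'' The lemma quantifies over all edge-geodetic sets, so the inclusion argument applies directly, with no minimality considerations needed. If a version for minimum sets were required, it would not follow as directly, because a minimum strong edge-geodetic set need not be a minimum edge-geodetic set. I would therefore state the lemma, as written, for all sets, and note that this is precisely the form used later, for example when combined with \cref{simplicial2023} for simplicial and pendant vertices. The main thing to check is that the definitions permit the empty-path issues, such as $|S|\ge 2$; this is automatic whenever $G$ has an edge. For a single-vertex graph the statement is vacuous.
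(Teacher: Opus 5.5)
Your proof is correct and is essentially the paper's argument. The paper argues by contradiction: a strong edge-geodetic set omitting $u$ would, because its chosen shortest paths cover every edge, already be an edge-geodetic set omitting $u$. That is your inclusion ``every strong edge-geodetic set is an edge-geodetic set'' phrased directly.
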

\begin{proof}
Let $u$ belongs to every edge-geodetic set of $G$.
Suppose, for contradiction, that there exists a strong edge-geodetic set
$S \subseteq V(G)$ such that $u \notin S$.
By the definition of a strong edge-geodetic set, for every unordered pair
$\{x,y\} \subseteq S$ one specific shortest $x$--$y$ path $P_{xy}$ is chosen and the union of these paths covers all edges of $G$. Since every edge of $G$ lies on \emph{some} shortest path between
a pair of vertices of $S$.
Hence, $S$ is also an edge-geodetic set of $G$.

This contradicts that $u$ belongs to every edge-geodetic set.
Therefore, $u$ must belong to every strong edge-geodetic set of $G$.
\end{proof}

\begin{lemma}\label{lem:eg-implies-meg}
Let $G$ be a connected graph and let $u \in V(G)$.
If $u$ belongs to every strong edge-geodetic set of $G$, then
$u$ belongs to every monitoring edge-geodetic set of $G$.
\end{lemma}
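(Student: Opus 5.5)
The plan mirrors the proof of \cref{lem:eg-implies-seg}: argue by contradiction and show that every MEG-set is in particular a strong edge-geodetic set. Once that is established, a MEG-set avoiding $u$ would be a strong edge-geodetic set avoiding $u$, contradicting the hypothesis.

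So suppose $u$ lies in every strong edge-geodetic set of $G$, and that some MEG-set $S$ of $G$ does not contain $u$. We want to show that $S$ is a strong edge-geodetic set. There are two routes.

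The first route invokes \cref{segiffmeg}. Since $S$ is an MEG-set, $S$ together with any assignment $f$ of shortest paths to pairs of $S$ is a strong edge-geodetic set. As $G$ is connected, every pair of $S$ has at least one shortest path, so such an $f$ exists. Fixing one, $(S,f)$ is a strong edge-geodetic set not containing $u$, which is the desired contradiction.

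The second route is self-contained. Pick, for each pair $\{x,y\}\subseteq S$, an arbitrary shortest $x$--$y$ path $P_{xy}$. Take any edge $e$. It is monitored by some pair $x,y\in S$, meaning $e$ lies on every shortest $x$--$y$ path, so in particular $e\in P_{xy}$. Hence the chosen paths cover $E(G)$ and $S$ is strong edge-geodetic.

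The only point needing care is nondegeneracy: $S$ should have at least two vertices, and $G$ at least one edge, so that the covering condition is meaningful. If $G$ has no edges, the statement is vacuous or trivial under the paper's conventions. Otherwise an MEG-set must monitor an edge, so it contains a pair. No real obstacle is expected; the lemma is essentially the inclusion ``MEG-set $\Rightarrow$ strong edge-geodetic set'' underlying $seg(G)\le meg(G)$.
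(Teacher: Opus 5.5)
Your proposal is correct, and your first route is exactly the paper's argument: assume an MEG-set $S$ with $u\notin S$, fix any assignment $f$ of shortest paths, and apply \Cref{segiffmeg} to conclude that $(S,f)$ is a strong edge-geodetic set avoiding $u$, a contradiction. Your second route proves the needed direction of \Cref{segiffmeg} inline: an edge monitored by $\{x,y\}$ lies on every shortest $x$--$y$ path, hence on the chosen one. This is equally valid and makes the argument self-contained.
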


\begin{proof}
Let $u$ belong to every strong edge-geodetic set of $G$.
Suppose, for the sake of contradiction, that there exists a monitoring edge-geodetic set
$S \subseteq V(G)$ such that $u \notin S$.

Since $S$ is a monitoring edge-geodetic set, $u$ lies on all shortest paths between some pair of vertices in $S$. Let $f$ be an assignment of a shortest path to each pair of vertices of $S$. Then, by the
\Cref{segiffmeg} that $S$, along with the assignment $f$, is a strong edge-geodetic set for any choice of $f$. 
Thus, there exists a strong edge-geodetic set of $G$ that does not contain $u$,
contradicting the assumption that $u$ belongs to every strong edge-geodetic set
of $G$.
Therefore, $u$ must belong to every monitoring edge-geodetic set of $G$.
\end{proof}

\begin{corollary}\label{eg-seg-meg}
Let $G$ be a connected graph.
Any vertex contained in every edge-geodetic set of $G$ is contained in every
monitoring edge-geodetic set of $G$.
\end{corollary}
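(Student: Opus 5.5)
The corollary follows by chaining the two preceding lemmas, so the proof is a direct transitivity argument. Let $G$ be a connected graph and let $u \in V(G)$ be a vertex that lies in every edge-geodetic set of $G$. We want to show that $u$ lies in every monitoring edge-geodetic set of $G$.

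First, apply \Cref{lem:eg-implies-seg} to $u$: since $u$ belongs to every edge-geodetic set, it belongs to every strong edge-geodetic set of $G$. Second, this conclusion is exactly the hypothesis of \Cref{lem:eg-implies-meg}. Applying that lemma, $u$ belongs to every monitoring edge-geodetic set of $G$, which is the claim.

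There is no real obstacle beyond checking that the hypotheses match: both lemmas are stated for an arbitrary connected graph $G$ and an arbitrary vertex $u$, so the conclusion of the first feeds directly into the second. If one prefers not to pass through strong edge-geodetic sets, the same conclusion follows from the underlying containments. Every MEG-set is a strong edge-geodetic set under any path assignment, by \Cref{segiffmeg}. Every strong edge-geodetic set is an edge-geodetic set, since each chosen path is a shortest path between two vertices of the set. Hence every MEG-set is an edge-geodetic set, and so it contains $u$.

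As a sanity check, combining the corollary with \Cref{simplicial2023} recovers the known fact that simplicial vertices lie in every MEG-set.
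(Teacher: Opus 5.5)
Your proof is correct and matches the paper's, which likewise obtains the corollary by chaining Lemmas~\ref{lem:eg-implies-seg} and~\ref{lem:eg-implies-meg}. Your alternative remark is also valid: every MEG-set is a strong edge-geodetic set under any assignment, and every strong edge-geodetic set is edge-geodetic, so every MEG-set is an edge-geodetic set and therefore contains $u$.
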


\begin{proof}
The result follows immediately from
Lemmas~\ref{lem:eg-implies-seg} and~\ref{lem:eg-implies-meg}.
\end{proof}

We now prove that for every $c\ge 4$ and every $d\ge c$, there exists
a connected graph realizing the parameter quadruple
$(g,eg,seg,meg)=(2,3,c,d)$.

\medskip

\begin{theorem}\label{23cd}
For any integer $c \ge 4$, there exists a connected graph $G_{2,3,c,d}$ such that
\[
g(G_{2,3,c,d}) = 2, \quad eg(G_{2,3,c,d}) = 3, \quad
seg(G_{2,3,c,d}) = c, \quad meg(G_{2,3,c,d}) = d
\].
\end{theorem}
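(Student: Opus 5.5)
We prove the statement by an explicit construction together with matching upper and lower bounds for each parameter. The plan is to start from a small core graph that has a geodetic pair $\{u,v\}$ but needs a third vertex to cover edges, and then attach two independent gadgets: one raising $seg$ to $c$ without changing $g$ or $eg$, and one raising $meg$ from $c$ to $d$ without changing the other three.

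\textbf{The core.} Take a vertex $u$ and a vertex $v$ at distance $2$, with $k$ common neighbours $x_1,\dots,x_k$, so the $u$--$v$ geodesics are $ux_iv$. Add edges among the $x_i$ making them a path or a matching. These edges lie on no $u$--$v$ geodesic. Every vertex still lies on a $u$--$v$ geodesic, so $g=2$. To fix $eg=3$, we ensure that all the extra edges are covered by geodesics from one further vertex $w$ to $u$ or $v$. For instance, attach a pendant vertex $w$ to $u$: then $w$ is simplicial and is forced by Lemma~\ref{simplicial2023}, and the geodetic set becomes $\{w,v\}$. We must check that $g$ stays $2$ and that no $2$-set is edge-geodetic.

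\textbf{The $seg$ gadget.} To force $seg=c$ while $eg=3$, we make many edges coverable from the same pair, but only via different geodesics. Replacing the $x_i$ by an independent family of $m$ internally disjoint $u$--$v$ paths of length $2$ costs nothing for $eg$. However, a single pair in a strong edge-geodetic set designates only one path. Each extra geodesic therefore needs a new pair, and with $t$ terminals only $\binom{t}{2}$ paths are available. This gives a lower bound of the form $\binom{c}{2}\ge m+\cdots$. A more controllable option is to use several twin-free ``diamond'' blocks arranged so that each block needs its own designated path. We then tune $m$ so that the least $t$ with $\binom{t}{2}$ sufficing equals $c$, and exhibit an explicit assignment of paths attaining it. We must also keep these middle vertices out of every MEG-set and out of the $eg$ computation. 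By Theorem~\ref{4-cycle}, the $x_i$ lie on $4$-cycles $ux_ivx_j$, so they may be forced into every MEG-set; this will contribute to $meg$ and has to be accounted for.

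\textbf{The $meg$ gadget.} To push $meg$ from $c$ up to $d$, attach $d-c$ extra vertices that are forced into every MEG-set but are unnecessary for $seg$. Twins satisfy Lemma~\ref{lem:twins}, but attached as pendants they would also enter the edge-geodetic set and raise $eg$. Instead, we add non-simplicial vertices $y_j$ adjacent to two vertices of the core lying on a common $4$-cycle. These are MEG-forced by Theorem~\ref{4-cycle}, yet their edges are already covered by existing $eg$ and $seg$ paths. We then count $meg$ as (forced vertices from Theorem~\ref{4-cycle} and Lemma~\ref{lem:twins}) plus the terminals needed. The upper bound comes from checking via Proposition~\ref{segiffmeg} that every edge is monitored.

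\textbf{Main obstacle.} The hardest step is the exact value $seg=c$. The upper bound is an explicit path assignment. The lower bound requires a counting or structural argument that any strong edge-geodetic set of size $c-1$ leaves some edge of the gadget uncovered. We would establish this first by showing that each designated path covers at most one ``private'' edge of each block. After that, we verify that the $meg$ gadget does not lower $seg$ and does not add terminals to $eg$.
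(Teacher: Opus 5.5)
Your overall architecture (a core that forces $eg=3$, then one gadget for $seg$ and one for $meg$) matches the paper's. However, the mechanisms you propose do not act independently on the four parameters, and the concrete instances you give fail.

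\textbf{The core.} A pendant $w$ at $u$ has $d(w,x_i)=2$ for every common neighbour $x_i$ of $u$ and $v$. An edge $ab$ can lie on a geodesic starting at $w$ only if $|d(w,a)-d(w,b)|=1$, so no edge among the $x_i$ is ever covered from $w$. For a matching edge $x_ix_j$, every other vertex of the core is equidistant from $x_i$ and $x_j$. That edge can therefore only be covered by a pair containing $x_i$ or $x_j$, and $eg$ grows with the number of such edges. The paper instead puts the uncovered edges at the third terminal itself: $z$ is joined to $w_0,u_0,w_1$, the edge $zw_0$ lies on no geodesic of the geodetic pair, and $\{z,x_0,v_{(c-3)r}\}$ is edge-geodetic.

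\textbf{The $seg$ gadget.} This has a more serious problem. Taking $m$ internally disjoint $u$--$v$ paths of length $2$ makes the middle vertices pairwise open twins ($N(x_i)=\{u,v\}$). By Lemma~\ref{lem:twins} all of them lie in every MEG-set, so $meg\ge m$. If, as you intend, comparing $\binom{t}{2}$ with $m$ is what pins $seg$ at $c$, then $m$ has to be about $\binom{c-1}{2}$. Hence $meg$ is quadratic in $c$ and overshoots $d$ whenever $d$ is close to $c$ (e.g.\ $d=c$). The ``twin-free diamond'' alternative is never specified. The paper raises $seg$ linearly and without $4$-cycles: it adds $c-3$ long parallel paths between the end paths $x_0\cdots x_{c-3}$ and $v_{0r}\cdots v_{(c-3)r}$. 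Each costs exactly one more terminal ($v_{ir}$) for $seg$ and one more vertex ($v_{i(r-1)}$) for $meg$.

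\textbf{The $meg$ gadget.} This interferes in the same way. If $y_j$ is joined to two adjacent vertices, it is simplicial, so by Lemma~\ref{simplicial2023} it lies in every edge-geodetic set and raises $eg$. If it is joined to two opposite vertices $p,q$ of a $4$-cycle (in your core, e.g.\ $u$ and $v$), then $py_jq$ is one more $p$--$q$ geodesic parallel to the existing ones. Its edges are new, so they are not ``already covered''. Each designated path crosses at most one of the parallel middle vertices, so $d-c$ such vertices across the same pair need $d-c$ distinct designated paths and force $seg$ up once $d-c$ is large. This is just your $seg$ mechanism again, and the $y_j$ are moreover twins of one another.

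\textbf{The missing idea.} You need to place the MEG-forcing $4$-cycles \emph{in series} along one long geodesic, as the paper does with the open twins $v'_{0(2n+1)}$ on $v_{00}\cdots v_{0r}$. The two designated paths $x_0$--$v_{0r}$ and $z$--$v_{0r}$ take the unprimed and the primed side of every $4$-cycle at once, so the chain is free for $seg$. Meanwhile each twin pair adds $2$ to $meg$ by Lemma~\ref{lem:twins}, and one extra twin $v'_{1(r-1)}$ fixes the parity of $d-c$. These mechanisms, together with the exact lower bound on $seg$ that you explicitly leave open, are what carry the proof, so your proposal does not yet establish the statement.
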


\begin{proof}
\begin{figure}[ht]
\centering
\begin{tikzpicture}[
  scale=0.75,
  vertex/.style={circle,draw,fill=black,inner sep=1pt,minimum size=1.8mm},
  small/.style={circle,draw,fill=black,inner sep=1pt,minimum size=1.8mm},
  every edge/.style={thick}
  ]

\node[vertex,label=left:$x_0$] (x0) at (-3,1) {};
\node[vertex,label=above left:$w_0$] (w0) at (-2,1) {};
\node[vertex,label=above:$u_0$] (u0) at (-1,1) {};
\node[vertex,label=above:$y_0$] (u1) at (0,1) {};
\foreach \i in {0,...,4} {
\node [vertex,label=above:$v_{0\i}$] (v0\i) at (\i+1, 1) {};
    }   
\node [vertex,label=above left:$v_{0{(r-2)}}$] (v0r-2) at (7, 1) {};
\node [vertex,label=above :$v_{0{(r-1)}}$] (v0r-1) at (8, 1) {};
\node [vertex,label=right :$v_{0r}$] (v0r) at (9,1) {};

\draw (x0)--(w0)--(u0)--(u1)--(v00)--(v01)--(v02)--(v03)--(v04);
\draw [thick, dotted] (v04)--(v0r-2);
\draw (v0r-2)--(v0r-1)--(v0r);

\node[vertex,label=left:$x_1$] (x1) at (-3,-1) {};
\node[vertex,label=above left:$w_1$] (w1) at (-2,-1) {};
\node[vertex,label=above:$u_1$] (u11) at (-1,-1) {};
\node[vertex,label=above:$y_1$] (u'1) at (0,-1) {};
\foreach \i in {0,...,4} {
\node [vertex,label=above:$v_{1\i}$] (v1\i) at (\i+1, -1) {};
    }   
\node [vertex,label=above left:$v_{1{(r-2)}}$] (v1r-2) at (7, -1) {};
\node [vertex,label=below :$v_{1{(r-1)}}$] (v1r-1) at (8, -1) {};
\node [vertex,label=right :$v_{1r}$] (v1r) at (9,-1) {};

\draw (x1)--(w1)--(u11)--(u'1)--(v10)--(v11)--(v12)--(v13)--(v14);
\draw [thick, dotted] (v14)--(v1r-2);
\draw (v1r-2)--(v1r-1)--(v1r);
  
\foreach \j in {2,3}  {
\node[vertex,label=left:$x_\j$] (x\j) at (-3,-\j) {};
\node[vertex,label=above:$w_\j$] (w\j) at (-2,-\j) {};
\node[vertex,label=above:$u_\j$] (u\j) at (-1,-\j) {};
\node[vertex,label=above:$y_\j$] (u2\j) at (0,-\j) {};

\foreach \i in {0,...,4} {
\node [vertex,label=above:$v_{\j\i}$] (v\j\i) at (\i+1, -\j) {};
    } 
\node [vertex,label=above :$v_{\j{(r-2)}}$] (v\j r-2) at (7, -\j) {};
\node [vertex,label=below :$v_{\j{(r-1)}}$] (v\j r-1) at (8, -\j) {};
\node [vertex,label=right :$v_{\j r}$] (v\j r) at (9,-\j) {};

\draw (x\j)--(w\j)--(v\j0)--(v\j1)--(v\j2)--(v\j3)--(v\j4);
\draw [thick, dotted] (v\j4)--(v\j r-2);
\draw (v\j r-2)--(v\j r-1)--(v\j r);
}

\node[vertex,label=above:$z$] (z) at (-1.5,2) {};
\draw (w0)--(z);
\draw (w1)--(z);
\draw (x0)--(x1)--(x2)--(x3);
\draw (v0r)--(v1r)--(v2r)--(v3r);
\draw (z)--(u0);

\node[vertex,red,label=above:$v'_{1_{r-1}}$] (v'1r-1) at (8,0) {};
\draw[dotted, thick, red] (v1r-2)--(v'1r-1)--(v1r);

\node [vertex,label=above:$v'_{01}$] (v'01) at (2, 2) {};
\node [vertex,label=above:$v'_{03}$] (v'03) at (4, 2) {};
\node [vertex,label=above:$v'_{0{(r-1)}}$] (v'0r-1) at (8, 2) {};

\draw (v00)--(v'01)--(v02)--(v'03)--(v04);
\draw (v0r-2)--(v'0r-1)--(v0r);

\node[vertex,label=left:$x_{c-4}$] (xc-4) at (-3,-5) {};
\node[vertex,label=above :$w_{c-4}$] (wc-4) at (-2,-5) {};
\node[vertex,label=above:$u_{c-4}$] (uc-4) at (-1,-5) {};
\node[vertex,label=above:$y_{c-4}$] (u'c-4) at (0,-5) {};
\node [vertex,label=above:$v_{{(c-4)}0}$] (vc-40) at (1, -5) {};\node [vertex,label=below:$v_{{(c-4)}1}$] (vc-41) at (2, -5) {};\node [vertex,label=above:$v_{{(c-4)}2}$] (vc-42) at (3, -5) {};\node [vertex,label=below:$v_{{(c-4)}3}$] (vc-43) at (4, -5) {};\node [vertex,label=above:$v_{{(c-4)}4}$] (vc-44) at (5, -5) {};
 
\node [vertex,label=above:$v_{{(c-4)}{(r-2)}}$] (vc-4r-2) at (7, -5) {};
\node [vertex,label=below :$v_{{(c-4)}{(r-1)}}$] (vc-4r-1) at (8, -5) {};
\node [vertex,label=right :$v_{{(c-4)}r}$] (vc-4r) at (9,-5) {};

\draw (xc-4)--(wc-4)--(uc-4)--(u'c-4)--(vc-40)--(vc-41)--(vc-42)--(vc-43)--(vc-44);
\draw [thick, dotted] (vc-44)--(vc-4r-2);
\draw (vc-4r-2)--(vc-4r-1)--(vc-4r);

\node[vertex,label=left:$x_{c-3}$] (xc-3) at (-3,-6.5) {};
\node[vertex,label=above:$w_{c-3}$] (wc-3) at (-2,-6.5) {};
\node[vertex,label=above:$u_{c-3}$] (uc-3) at (-1,-6.5) {};
\node[vertex,label=above:$u'_{c-3}$] (u'c-3) at (0,-6.5) {};

\node [vertex,label=below:$v_{{(c-3)}0}$] (vc-30) at (1, -6.5) {};\node [vertex,label=above:$v_{{(c-3)}1}$] (vc-31) at (2, -6.5) {};\node [vertex,label=below:$v_{{(c-3)}2}$] (vc-32) at (3, -6.5) {};\node [vertex,label=above:$v_{{(c-3)}3}$] (vc-33) at (4, -6.5) {};\node [vertex,label=below:$v_{{(c-3)}4}$] (vc-34) at (5, -6.5) {};
  
\node [vertex,label=above :$v_{{(c-3)}{(r-2)}}$] (vc-3r-2) at (7, -6.5) {};
\node [vertex,label=below :$v_{{(c-3)}{(r-1)}}$] (vc-3r-1) at (8, -6.5) {};
\node [vertex,label=right :$v_{{(c-3)}r}$] (vc-3r) at (9,-6.5) {};

\draw (xc-3)--(wc-3)--(uc-3)--(u'c-3)--(vc-30)--(vc-31)--(vc-32)--(vc-33)--(vc-34);
\draw [thick, dotted] (vc-34)--(vc-3r-2);
\draw (vc-3r-2)--(vc-3r-1)--(vc-3r);

\draw[dotted, thick] (x3)--(xc-4);
\draw[dotted, thick] (v3r)--(vc-4r);

\draw (xc-4)--(xc-3);
\draw (vc-4r)--(vc-3r);

\end{tikzpicture}
\caption{The graph $G_{2,3,c,d}$ for $c\ge 4$. 
(The vertex $v'_{1(r-1)}$ and the dotted edges are added when $d-c \ge 2$ is even.)}
\label{$G_{2,3,c,d}$}
\end{figure}
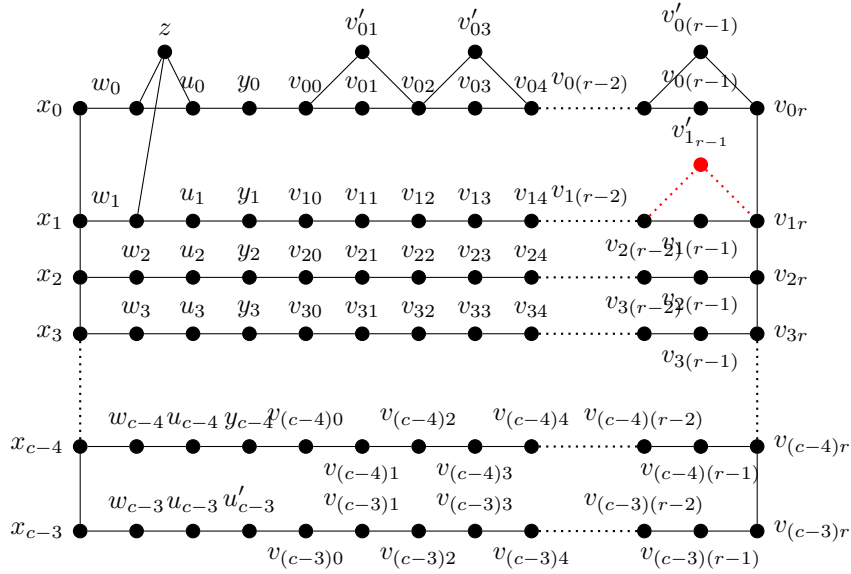

The graph $G_{2,3,c,d}$ is constructed as shown in \Cref{$G_{2,3,c,d}$}.
We define the variable $r$ as follows:
\[
r \;=\; \begin{cases}
d-c+1, &\text{if } d-c \text{ is odd},\\[1mm]
d-c, &\text{if } d-c \text{ is even},\\[1mm]
\end{cases}
\]
It consists of $c-2$ parallel paths of length $r+4$ labelled
$x_i w_i u_i u'_i v_{i_0} v_{i_1} \cdots v_{i_r}$ for $i \in \{0,\dots,c-3\}$, together with edges
$x_i x_{i+1}$ and $v_{ir} v_{{(i+1)}r}$ for $0 \le i \le c-4$.
We add twins $v'_{0i}$ to the vertices of the form $v_{0i}$ where $i=2n+1$ and $n \in \{0,\cdots,\frac{r-2}{2}\}$ when $d-c>0$.
Finally, a vertex $z$ is added and joined to $w_0$, $u_0$ and $w_1$.
In the case wherein $d-c>0$ is even, we add an additional twin $v'_{1{(r-1)}}$ to the vertex $v_{1{(r-1)}}$. 
We define the set $V$, $Y$ and $X$ as follows:
\begin{center}
    $V=\{v_{1{(r-1)}},\cdots,v_{(c-3){(r-1)}}\}$, \quad
    $Y=\{v_{1r},\cdots,v_{(c-3)r}\}$
    \[
X \;=\; \begin{cases}
\{v_{0i},v'_{0i} : i=2n+1 \text{ where } n \in \{0,\cdots,\frac{r-2}{2}\} \}, &\text{if } d-c \text{ is odd},\\[1mm]
\{v_{0i},v'_{0i} : i=2n+1 \text{ where } n \in \{0,\cdots,\frac{r-2}{2}\} \} \cup \{v'_{1{(r-1)}}\}, &\text{if } d-c>0 \text{ is even},\\[1mm]
\emptyset, &\text{if } d-c=0
\end{cases}
    \]
\end{center}
This is the construction of $G_{2,3,c,d}$ where $c\geq 4$. Now, we calculate the parameter $g$, $eg$, $seg$ and $meg$ of this construction graph as follows:

Every vertex of $G_{2,3,c,d}$ lies on at least one shortest path of either 
$v_{0r}$--$x_{c-3}$ path or $x_{0}$--$v_{(c-3)r}$ path.
Hence, either $\{v_{0r},x_{c-3}\}$ or $\{x_{0}$,$v_{(c-3)r}\}$ is a geodetic set and
\[
g(G_{2,3,c,d}) = 2.
\]
Observe that the edge $z w_0$ does not lie on any shortest either $v_{0r}$--$x_{c-3}$ path or $x_{0}$--$v_{(c-3)r}$ path.
Therefore, it forces the vertex $z$ must belong to every edge-geodetic set to cover all incident edges of $z$.
Moreover, since no pair of vertices covers all edges of the graph, we take the set with at least $3$ vertices. Let
\[
S_{eg}=\{z,x_0,v_{(c-3)r}\}.
\]
We claim that $S_{eg}$ is an edge-geodetic set.
Indeed, the pair $\{x_0,v_{(c-3)r}\}$ covers all edges of the layered structure of the graph,
while the pairs $\{z,x_0\}$ and $\{z,v_{(c-3)r}\}$ cover the edges incident with $z$.
Hence, the edge-geodetic number of the graph is,
\begin{center}
    $eg(G_{2,3,c,d})=|S_{eg}|=3$
\end{center}
Since $z$ belongs to every edge-geodetic set, Lemma~\ref{lem:eg-implies-seg}
implies that $z$ belongs to every strong edge-geodetic set.
Furthermore, the edges of the paths $x_0 w_0 z$, $x_0$--$v_{0r}$ and the twin vertices on the $z$--$v_{0r}$ paths 
force the inclusion of the vertices $x_0$ and $v_{0r}$ in the strong
edge-geodetic set.

For each $i \in \{1,\dots,c-3\}$, the edge $v_{i{(r-1)}} v_{ir}$ lies on a unique
shortest $v_{i{(r-1)}}$--$v_{ir}$ path. Hence, at least one of $v_{i{(r-1)}}$ or $v_{ir}$
must belong to any strong edge-geodetic set when $d-c$ is odd. When $d-c$ is even, $v_{ir}$
must belong to any strong edge-geodetic set to cover the $v'_{1(r-1)}v_{1r}$.
Consequently, every strong edge-geodetic set has cardinality at least $c$.
On the other hand, the set
$\{x_0,z,v_{0r}\} \cup \{v_{1r},\dots,v_{(c-3)r}\}$ admits a choice of shortest paths whose union covers all edges of $G_{2,3,c,d}$.
Therefore,
\[
seg(G_{2,3,c,d}) = |\{x_0,z,v_{0r}\}|+|Y|=3+c-3=c.
\]
By Lemma~\ref{lem:eg-implies-meg}, the vertex $z$ belongs to every monitoring
edge-geodetic set.
Moreover, for each $i \in \{1,\dots,c-3\}$, at least one of $v_{i{(r-1)}}$ or $v_{ir}$
must belong to every monitoring edge-geodetic set.
However, choosing $v_{ir}$ does not allow all edges to be monitored,
whereas the vertices $v_{i{(r-1)}}$, together with $x_0$, $z$ and vertices of $X$ (as they are twins and by the \cref{lem:twins}),
monitor all edges of the graph.
Thus, the set
\(
\{x_0,z\} \cup \{v_{1(r-1)},\dots,v_{(c-3)(r-1)}\} \cup X
\)
is a monitoring edge-geodetic set, and no smaller such set exists.
Hence,
\[
meg(G_{2,3,c,d}) = \begin{cases}
|\{x_0,z\}|+|V|+|X| = 2 + c-3 + d-c+1 = d, &\text{if } d-c>0,\\[1mm]
|\{x_0,z,v_{00}\}|+|V|+|X|=3+c-3+0=c=d, &\text{if } d-c=0\\[1mm]
\end{cases} 
\]
This completes the proof.
\end{proof}

\subsection{General realization for \texorpdfstring{$g(G)=2$ and $eg(G)\neq3$}{g(G)=2 and eg(G)=3}}
\label{subsec:g2-general}

The previous \Cref{eliminating} and \cref{subsec:23-realization} were devoted to the detailed analysis of the case
$g(G)=2$ and $eg(G)=3$, including both non-existence results and explicit
constructions realizing all admissible parameter quadruples.
We now move beyond this exceptional case and establish a general existence
theorem for all remaining admissible quadruples with $g(G)=2$.

\begin{theorem}\label{thm:2b-neq3cd}
For any positive integers $2=a \leq b \leq c \leq d$, except for $b=3$,
there exists a connected graph $G_{2,b,c,d}$ with
\[
g(G_{2,b,c,d}) = 2,\quad
eg(G_{2,b,c,d}) = b,\quad
seg(G_{2,b,c,d}) = c,\quad
meg(G_{2,b,c,d}) = d.
\]
\end{theorem}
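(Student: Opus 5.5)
We split into two regimes, $b=2$ and $b\ge 4$. In both we reuse the layered skeleton of \Cref{23cd}: parallel rows $x_i w_i u_i u'_i v_{i0}\cdots v_{ir}$ joined by the "rungs" $x_ix_{i+1}$ and $v_{ir}v_{(i+1)r}$. In that skeleton the two opposite corners $x_0$ and $v_{(c'-1)r}$ already form a geodetic pair, so $g=2$ (here $c'$ is the number of rows, chosen below). There are three tuning devices. The quantity $b$ is controlled by off-geodesic vertices that are forced into every edge-geodetic set. The gap $c-b$ is controlled by the number of extra rows, since the end edge $v_{i(r-1)}v_{ir}$ of each extra row forces one more vertex into any strong edge-geodetic set. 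The gap $d-c$ is controlled by the twin gadgets $v'_{0i}$ (and $v'_{1(r-1)}$ in the even case), with the same parity choice of $r$ as in \Cref{23cd}. Each twin gadget adds to $meg$ by \Cref{lem:twins} but leaves $seg$ unchanged, because one chosen shortest path can route through either twin.

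\emph{Case $b\ge 4$.} We replace the single vertex $z$ of \Cref{23cd} by $b-2$ vertices $z_1,\dots,z_{b-2}$. Each $z_j$ is attached to a pair of vertices in consecutive rows at equal distance from $x_0$, say $z_j$ adjacent to $w_{j-1}$ and $w_j$ (enlarging the number of rows if needed). Then each $z_j$ still lies on some shortest $x_0$--$v_{(c'-1)r}$ path, so $g=2$ is preserved. However, one edge at $z_j$ lies on no such geodesic, and neither endpoint of that edge can cover it from the corners. Exactly as in the proof of \Cref{23cd}, this forces every $z_j$ into every edge-geodetic set. Adding the two corners covers everything, so $eg=(b-2)+2=b$. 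By \Cref{eg-seg-meg} all the $z_j$ also lie in every strong edge-geodetic set and every MEG-set. We then take $c-b$ extra rows, as in the set $Y$ of \Cref{23cd}, to get the lower bound $seg\ge b+(c-b)=c$. For the upper bound we exhibit an explicit choice of paths. Finally the twin set $X$ is sized to add exactly $d-c$ to the MEG-set, which gives $meg=d$. Both bounds are checked as in \Cref{23cd}, using \Cref{segiffmeg} for the MEG upper bound and \Cref{4-cycle} to certify which vertices are forced.

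\emph{Case $b=2$.} Here every edge must lie on a geodesic between one pair $\{u,v\}$. By \Cref{thm:no_222d}, $c=2$ forces $d=2$, which is realized by a path. So the admissible range is $c\ge 3$, and the statement should be read with the quadruples $(2,2,2,d)$, $d>2$, excluded. For $c\ge 3$ we take a "ladder of geodesics": internally disjoint $u$--$v$ paths of equal length, meeting only at $u$ and $v$, plus the twin gadgets on one of them for $d-c$. Then $\{u,v\}$ is edge-geodetic, so $b=2$. A single chosen $u$--$v$ path covers only one route, so extra terminals are needed. The number of parallel routes should be chosen so that each additional terminal placed on a route's interior covers at most that route, which makes $seg=c$.

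The main obstacle is the exact lower bound on $seg$ in both cases. The weak route is the pigeonhole claim that each added terminal can rescue at most one unmonitored row or route. This must be proved carefully, because a terminal's chosen paths to several other terminals could a priori pass through several rows. To control this, I would make the rows long ($r$ large relative to the number of rows). Then any shortest path between two terminals that switches rows must use a rung at an end, and a rung at the $x$ side and one at the $v$ side cannot both appear on one geodesic. This localizes coverage to one row per extra terminal.
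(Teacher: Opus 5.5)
The proposal has genuine gaps, in both the $b\ge 4$ case and the $b=2$ case.

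\textbf{The case $b\ge4$.} The paper's proof does not use the row skeleton of \Cref{23cd}. It joins two hubs $y,x_0$ by length-$3$ routes, puts a clique on $w_1,\dots,w_{b-2}$, and hangs a twin-decorated tail at $x_0$. The two forcing principles you import from \Cref{23cd} are false in the skeleton.

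First, the end edge $v_{i(r-1)}v_{ir}$ forces nothing. It lies on the staircase geodesic $x_0\cdots x_i$, row $i$, $v_{ir}\cdots v_{(c'-1)r}$, which is exactly the kind of geodesic you use for $g=2$. This also refutes your claim that an $x$-side rung and a $v$-side rung never lie on one geodesic. Long rows only guarantee that each \emph{chosen path} crosses at most one row. So the pigeonhole unit is the pair, not the terminal. With the four corners as terminals, the pairs $\{x_0,v_{0r}\}$, $\{x_{c'-1},v_{(c'-1)r}\}$, $\{x_0,v_{(c'-1)r}\}$, $\{x_{c'-1},v_{0r}\}$ already cross four different rows. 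Placing $p$ suitably spaced terminals on each end-column lets the $p^2$ left--right pairs cross $p^2$ distinct rows. Hence $c-b$ extra rows contribute only $O(\sqrt{c-b})$ to $seg$, not $c-b$.

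Second, the bumps $z_j$ do not behave as claimed. Take the neighbours of $z_j$ to be either equidistant from $x_0$, or $w_{j-1},w_j$ (at distances $j$ and $j+1$). In both cases no neighbour of $z_j$ is farther from $x_0$ than $z_j$ itself. So $z_j$ is interior to no geodesic from $x_0$, and your $g=2$ argument fails. In \Cref{23cd}, $z$ joins $w_1$ to $u_0$ diagonally, which fits the pair $\{x_{c-3},v_{0r}\}$, not $\{x_0,v_{(c-3)r}\}$.

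Nor is $z_j$ forced into edge-geodetic sets. It is neither simplicial nor a twin. The geodesic $w_0z_1w_1z_2w_2$ has the same length as $w_0x_0x_1x_2w_2$, so the pairs $\{w_0,w_2\},\{w_2,w_4\},\dots$ together with the corners give an edge-geodetic set of size about $b/2+2<b$ for large $b$.

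The paper's gadgets do localize. A clique edge $w_iw_j$ lies only on geodesics with one end in $\{z_i,w_i\}$ and the other in $\{z_j,w_j\}$. A geodesic through the middle edge $f_iv_i$ of a route must end in $\{f_i,v_i\}$ or at $y$, so each chosen path rescues at most one route.

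\textbf{The case $b=2$.} This part is only a sketch: neither $seg$ nor $meg$ is computed. For $d=c$ it cannot work. Consider a theta graph with $k\ge 3$ internally disjoint $u$--$v$ routes of equal length $\ell$.
\begin{itemize}
\item Every MEG-set contains an interior vertex of each route. The only pair with a geodesic running through a whole route is $\{u,v\}$, and it has $k$ geodesics, so it monitors nothing.
\item One vertex per route never suffices. Let route $i$'s terminal be at distance $t_i$ from $u$. The edge of route $i$ at $u$ is monitored only if $t_i+t_j<\ell$ for some $j$. The edge at $v$ is monitored only if $t_i+t_{j'}>\ell$ for some $j'$.
\item Applying this to the routes with the largest and smallest $t_i$ gives $t_{\min}+t_{\max}<\ell<t_{\min}+t_{\max}$, a contradiction.
\end{itemize}
So $meg\ge k+1$, while $seg=k$ as soon as $k\ge 4$. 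With no twins added, your graph therefore never realizes $(2,2,c,c)$ for $c\ge5$. For $c\ge 3$, the paper's own $b=2$, $d=c$ instance, the theta graph with $c$ routes of length $3$, likewise has $meg=c+1$.

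Your twin bookkeeping is also too quick. A twin pair needs two chosen paths through it, so ``one path can route through either twin'' does not show that $seg$ is unchanged.

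Your observation that $(2,2,2,d)$ with $d>2$ must be excluded, by \Cref{thm:no_222d}, is correct. The statement as written overlooks it.
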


\begin{proof}

\begin{figure}
	\centering	

\begin{subfigure}[b]{0.3\textwidth}
		\centering
		\scalebox{0.85}{\begin{tikzpicture}[
  scale=0.6, rotate=270,
  vertex/.style={circle,draw,fill=black,inner sep=1pt,minimum size=1.8mm},
  small/.style={circle,draw,fill=black,inner sep=1pt,minimum size=1.8mm},
  every edge/.style={thick}
  ]

\node[vertex,label=above:$y$] (y) at (-3,0) {};
\node[vertex,label=below:$x_0$] (x) at (4,0) {};

\node[vertex,label=right:$z_1$] (z1) at (-1,2.5) {};
\node[vertex,label=right:$w_1$] (w1) at (2,2.5) {};
\node[vertex,label=below right:$z_2$] (z2) at (-1,1.5) {};
\node[vertex,label=below right:$w_2$] (w2) at (2,1.5) {};
\node[vertex,label=below right:$z_{b-2}$] (zb-2) at (-1,0) {};
\node[vertex,label=below right:$w_{b-2}$] (wb-2) at (2,0) {};

\node[vertex,label=right:$f_1$] (f1) at (-1,-1.5) {};
\node[vertex,label=right:$v_1$] (v1) at (2,-1.5) {};
\node[vertex,label=below left:$f_2$] (f2) at (-1,-2.5) {};
\node[vertex,label=above left:$v_2$] (v2) at (2,-2.5) {};
\node[vertex,label=left:$f_{c-b+1}$] (fc-b+1) at (-1,-3.5) {};
\node[vertex,label=left:$v_{c-b+1}$] (vc-b+1) at (2,-3.5) {};

\node (extra) at (8,-1) {};

\draw (y)--(z1)--(w1)--(x);
\draw (y)--(z2)--(w2)--(x);
\draw (y)--(zb-2)--(wb-2)--(x);
\draw (y)--(f1)--(v1)--(x);
\draw (y)--(f2)--(v2)--(x);
\draw (y)--(fc-b+1)--(vc-b+1)--(x);

\draw[bend left=60] (wb-2) to (w1);
\draw[bend right=60] (w2) to (wb-2);
\draw[bend right=60] (w1) to (w2);

\draw [dotted, thick] (w2)--(wb-2);
\draw [dotted, thick] (z2)--(zb-2);
\draw [dotted, thick] (v2)--(vc-b+1);
\draw [dotted, thick] (f2)--(fc-b+1);
\end{tikzpicture}}
		\caption{$2,b,c,d$ where $b\neq3$ and $d-c=0$.}
	\end{subfigure}\hfil
	\begin{subfigure}[b]{0.35\textwidth}
		\centering
		\scalebox{0.85}{\begin{tikzpicture}[
  scale=0.6, rotate=270,
  vertex/.style={circle,draw,fill=black,inner sep=1pt,minimum size=1.8mm},
  small/.style={circle,draw,fill=black,inner sep=1pt,minimum size=1.8mm},
  every edge/.style={thick}
  ]

\node[vertex,label=above:$y$] (y) at (-3,0) {};
\node[vertex,label=left:$x_0$] (x) at (4,0) {};

\node[vertex,label=right:$z_1$] (z1) at (-1,2.5) {};
\node[vertex,label=right:$w_1$] (w1) at (2,2.5) {};
\node[vertex,label=below right:$z_2$] (z2) at (-1,1.5) {};
\node[vertex,label=below right:$w_2$] (w2) at (2,1.5) {};
\node[vertex,label=below right:$z_{b-2}$] (zb-2) at (-1,0) {};
\node[vertex,label=below right:$w_{b-2}$] (wb-2) at (2,0) {};

\node[vertex,label=right:$f_1$] (f1) at (-1,-1.5) {};
\node[vertex,label=right:$v_1$] (v1) at (2,-1.5) {};
\node[vertex,label=below left:$f_2$] (f2) at (-1,-2.5) {};
\node[vertex,label=above left:$v_2$] (v2) at (2,-2.5) {};
\node[vertex,label=left:$f_{c-b+1}$] (fc-b+1) at (-1,-3.5) {};
\node[vertex,label=left:$v_{c-b+1}$] (vc-b+1) at (2,-3.5) {};

\draw (y)--(z1)--(w1)--(x);
\draw (y)--(z2)--(w2)--(x);
\draw (y)--(zb-2)--(wb-2)--(x);
\draw (y)--(f1)--(v1)--(x);
\draw (y)--(f2)--(v2)--(x);
\draw (y)--(fc-b+1)--(vc-b+1)--(x);

\draw[bend left=60] (wb-2) to (w1);
\draw[bend right=60] (w2) to (wb-2);
\draw[bend right=60] (w1) to (w2);

\draw [dotted, thick] (w2)--(wb-2);
\draw [dotted, thick] (z2)--(zb-2);
\draw [dotted, thick] (v2)--(vc-b+1);
\draw [dotted, thick] (f2)--(fc-b+1);

\node[vertex,label=left:$x_1$] (x1) at (5,0) {};
\node[vertex,label=left:$x_2$] (x2) at (6,0) {};
\node[vertex,label=left:$x_3$] (x3) at (7,0) {};
\node[vertex,label=left:$x_4$] (x4) at (8,0) {};
\node[vertex,label=left:$x_5$] (x5) at (9,0) {};
\node[vertex,label=left:$x_{r-2}$] (xr-2) at (10.5,0) {};
\node[vertex,label=left:$x_{r-1}$] (xr-1) at (11.5,0) {};
\node[vertex,label=left:$x_r$] (xr) at (12.5,0) {};
\node[vertex,label=right:$x'_2$] (x'2) at (6,1) {};
\node[vertex,label=right:$x'_4$] (x'4) at (8,1) {};
\node[vertex,label=right:$x'_{r-1}$] (x'r-1) at (11.5,1) {};

\node (extra) at (13.8,-1) {};

\draw (x)--(x1)--(x2)--(x3)--(x4)--(x5);
\draw [dotted, thick] (x5)--(xr-2);
\draw (xr-2)--(xr-1)--(xr);

\draw (x1)--(x'2)--(x3);
\draw (x3)--(x'4)--(x5);
\draw (xr-2)--(x'r-1)--(xr);

\end{tikzpicture}
			
		}
		\caption{$2,b,c,d$ where $b\neq3$ and $d-c$ is odd.}
	\end{subfigure}\hfil
	\begin{subfigure}[b]{0.32\textwidth}
		\centering
		\scalebox{0.85}{\begin{tikzpicture}[
  scale=0.6,rotate=270,
  vertex/.style={circle,draw,fill=black,inner sep=1pt,minimum size=1.8mm},
  small/.style={circle,draw,fill=black,inner sep=1pt,minimum size=1.8mm},
  every edge/.style={thick}
  ]

\node[vertex,label=above:$y$] (y) at (-3,0) {};
\node[vertex,label=left:$x_0$] (x) at (4,0) {};

\node[vertex,label=right:$z_1$] (z1) at (-1,2.5) {};
\node[vertex,label=right:$w_1$] (w1) at (2,2.5) {};
\node[vertex,label=below right:$z_2$] (z2) at (-1,1.5) {};
\node[vertex,label=below right:$w_2$] (w2) at (2,1.5) {};
\node[vertex,label=below right:$z_{b-2}$] (zb-2) at (-1,0) {};
\node[vertex,label=below right:$w_{b-2}$] (wb-2) at (2,0) {};

\node[vertex,label=below right:$f_1$] (f1) at (-1,-1.5) {};
\node[vertex,label=right:$v_1$] (v1) at (2,-1.5) {};
\node[vertex,label=below left:$f_2$] (f2) at (-1,-2.5) {};
\node[vertex,label=above left:$v_2$] (v2) at (2,-2.5) {};
\node[vertex,label=left:$f_{c-b+1}$] (fc-b+1) at (-1,-3.5) {};
\node[vertex,label=left:$v_{c-b+1}$] (vc-b+1) at (2,-3.5) {};

\draw (y)--(z1)--(w1)--(x);
\draw (y)--(z2)--(w2)--(x);
\draw (y)--(zb-2)--(wb-2)--(x);
\draw (y)--(f1)--(v1)--(x);
\draw (y)--(f2)--(v2)--(x);
\draw (y)--(fc-b+1)--(vc-b+1)--(x);

\draw[bend left=60] (wb-2) to (w1);
\draw[bend right=60] (w2) to (wb-2);
\draw[bend right=60] (w1) to (w2);

\draw [dotted, thick] (w2)--(wb-2);
\draw [dotted, thick] (z2)--(zb-2);
\draw [dotted, thick] (v2)--(vc-b+1);
\draw [dotted, thick] (f2)--(fc-b+1);

\node[vertex,label=left:$x_1$] (x1) at (5,0) {};
\node[vertex,label=left:$x_2$] (x2) at (6,0) {};
\node[vertex,label=left:$x_3$] (x3) at (7,0) {};
\node[vertex,label=left:$x_4$] (x4) at (8,0) {};
\node[vertex,label=left:$x_5$] (x5) at (9,0) {};
\node[vertex,label=left:$x_{r-3}$] (xr-3) at (10.5,0) {};
\node[vertex,label=left:$x_{r-2}$] (xr-2) at (11.5,0) {};
\node[vertex,label=left:$x_{r-1}$] (xr-1) at (12.5,0) {};
\node[vertex,label=left:$x_r$] (xr) at (13.5,0) {};
\node[vertex,label=right:$x'_2$] (x'2) at (6,1) {};
\node[vertex,label=right:$x'_4$] (x'4) at (8,1) {};
\node[vertex,label=right:$x'_{r-2}$] (x'r-2) at (11.5,1) {};
\node[vertex,label=right:$x'_{r-1}$] (x'r-1) at (12.5,1) {};

\draw (x)--(x1)--(x2)--(x3)--(x4)--(x5);
\draw [dotted, thick] (x5)--(xr-3);
\draw (xr-3)--(xr-2)--(xr-1)--(xr);

\draw (x1)--(x'2)--(x3);
\draw (x3)--(x'4)--(x5);
\draw (xr-3)--(x'r-2)--(x'r-1)--(xr);

\end{tikzpicture}
		}
		\caption{$2,b,c,d$ where $b\neq3$, $d-c>0$ and $d-c$ is even.}
	\end{subfigure}	
\end{figure}

\begin{figure}
\centering

\end{figure}

We construct $G_{2,b,c,d}$ except for $b=3$ in three cases depending on the parity of $d-c$. Define
\[
r=
\begin{cases}
d-c, & d-c\text{ odd},\\[2mm]
d-c+2, & d-c>0 \text{ even},\\[2mm]
0, & d-c=0.
\end{cases}
\]

 At the beginning, drawing a path $P_{r+4}$ and labeling the vertices in order as $y$, $z_1$, $w_1$, $x_0$, $x_1,\dots,x_r$.

 Then, we add $b-3$ parallel paths $ yz_iw_ix_0$ from $y$ to $x_0$ where $i \in \{2,\dots,b-2\}$.
Let
\[
W=\{w_1,\dots,w_{\,b-2}\},
\]
and make $W$ a clique.

 Next again, adding $c-b+1$ additional parallel paths $yf_iv_ix_0$ from $y$ to $x_0$ where $i \in \{1,2, \dots,c-b+1\}$. For each $i\in\{1,\dots,c-b+1\}$, let
\[
V=\{v_1,\dots,v_{c-b+1}\}.
\]

For the last step of the construction, We distinguish the three cases:

\begin{itemize}
    \item[\textbf{(i)}] If \emph{$d-c>0$ even},
    we add open twins $x'_{2i}$ to each vertex $x_{2i}$, $1\le i\le \lfloor \frac{d-c-1}{2} \rfloor$, and add a parallel path 
$x_{r-3}x'_{r-2}x'_{r-1}x_r$. Define
\[
X=\{x_{2i},x'_{2i}:1\le i\le \big\lfloor \frac{d-c-1}{2} \big\rfloor\}\cup\{x_{r-1},x'_{r-1}\}.
\]

\item[\textbf{(ii)}] If \emph{$d-c$ odd},
add open twins $x'_{2i}$ to each $x_{2i}$ for $1\le i\le \lfloor \frac{d-c}{2} \rfloor$.
Set
\[
X=\{x_{2i},x'_{2i}:1\le i\le \lfloor \frac{d-c}{2} \rfloor\}\cup\{x_r\}.
\]
\item[\textbf{(iii)}] If \emph{$d-c=0$},
set $X=\emptyset$.
\end{itemize}
This completes the construction. Now, we present the calculation of $g$, $eg$, $seg$, and $meg$ is as follows:

 Every vertex of the graph $G_{2,b,c,d}$ is covered in at least one of the shortest paths between vertices $y$ and $x_r$. Hence, the geodetic set is $\{y,x_r\}$ and the geodetic number is

\[
        g(G_{2,2,c,d})=|\{y,x_r\}|=2
\]

Observe that none of the shortest paths between $y$ and $x_0$ covers the edges of the clique $W$. Thus, $w_i\in W$ must be chosen as in our edge-geodetic set. Hence,  edge-geodetic set is $\{y,x_0\}\cup W$ and the edge-geodetic number of the graph is
\[
eg(G_{2,b,c,d})=|\{y,x_0\}|+|W|
=2+(b-2)=b.
\]

For the strong edge-geodetic set, we notice that the path $y z_1 w_1 x_0 x_1 \cdots x_r$ 
in the graph $G_{2,b,c,d}$ is a diametral path. Hence, need to include $y$ and $x_r$ as they are the terminal vertices on the opposite ends of the graph and all the vertices of $W$ to cover the edges of the clique. Hence, we can assign that a shortest $y$--$x_r$ path can cover at most one of the $c-b+1$ parallel paths 
$y f_i v_i x_0$. Without loss of generality, assume it covers $yf_1v_1x_0$. Thus, to cover the remaining $yf_iv_ix_0$ paths, all remaining $v_i$ (for $i\ge 2$) must be included. Hence, the strong edge-geodetic set is $\{y,x_r\}\cup W\cup (V\setminus \{v_1\})$ and strong edge-geodetic number is

   \[
    seg(G_{2,b,c,d})=|\{y,x_r\}|+|W|+(|V|-1)=2+b-2+c-b+1-1=c\]

For the calculation of the monitoring edge-geodetic number, we know that 
all edges of the clique require all $w_i$ in $W$. Then, to monitor all edges that lie on the shortest paths between $y$ and $w_i$, $y$ is required to add to the MEG-set. Moreover, to monitor all parallel edges between $y$ and $x_0$, each $v_i$ is required to monitor those parallel paths. 

Furthermore, we know that
all twin vertices must be included in the MEG-set. In this case, we proceed with three cases separately as per the values of $r$,

\begin{itemize}
    \item[\textbf{(i)}] When \emph{$d-c>0$ even}, we need to take twin vertices along with the vertices $x_{r-1}$ and $x'_{r-1}$ to monitor edges of the $C_6$ formed on the end of the path $x_0x_1\dots x_r$. Hence, the MEG-set of $G_{2,b,c,d}$ is $\{y\}\cup W \cup V \cup  X$ and

        \[
        meg(G_{2,b,c,d})=|\{y\}|+|W|+|V|+|X|=1+b-2+c-b+1+2(\frac{d-c-2}{2})+2=d\]
        
    \item[\textbf{(ii)}] When $d-c$ is odd, we need to take twin vertices along with vertex $x_r$ to monitor all final edges $x'_{r-1}x_r$ and $x_{r-1}x_r$. Hence, the MEG-set of $G_{2,b,c,d}$ is $\{y\}\cup W \cup V \cup  X$ and

        \begin{center}
        $meg(G_{2,b,c,d})=|\{y\}|+|W|+|V|+|X|=1+b-2+c-b+1+2\frac{(d-c-1)}{2}+1=d$
        \end{center}
        
   \item[\textbf{(iii)}] When $d-c=0$, there does not exist any twin vertices and $X=\emptyset$. Hence, $y$ to each $w_i$ and each $w_ix_0$ to each $x_0v_i$ and the edges of the clique in $W$ have been monitored by $\{y\}\cup W\cup V$. Hence, the vertex $x_0$ is not required to take as in the MEG-set. Hence, the monitoring edge-geodetic number of the graph is,

        \begin{center}
        $meg(G_{2,b,c,d})=|\{y\}|+|W|+|V|=1+b-2+c-b+1=c=d$
        \end{center}
\end{itemize}

Thus in all cases, $meg(G_{2,b,c,d})=d$, completing the proof.
\end{proof}

While \Cref{thm:2b-neq3cd} guarantees the existence of graphs realizing all
admissible parameter quadruples with $g(G)=2$ and $eg(G)\neq 3$, its proof relies
on several case-dependent arguments.
We now present a general construction framework that subsumes these cases and
systematically produces graphs with prescribed values of the four parameters.

\subsection{Generalizing the construction to arbitrary parameters}\label{subsec:general-construction}

So far, our analysis has focused on graphs with geodetic number $g(G)=2$.
Using the general construction framework introduced in
\Cref{subsec:general-construction}, we now extend these results to arbitrary
values of the geodetic number.
In particular, we show that every admissible parameter quadruple
$(a,b,c,d)$ with $3 \le a \le b \le c \le d$ can be realized by a connected graph.

\medskip

\begin{theorem}\label{thm:abcd-construction}
For any integers $3 \le a \le b \le c \le d$, there exists a connected graph
$G_{a,b,c,d}$ satisfying
\[
g(G_{a,b,c,d}) = a,\qquad eg(G_{a,b,c,d}) = b,\qquad seg(G_{a,b,c,d}) = c,
\qquad meg(G_{a,b,c,d}) = d.
\]
\end{theorem}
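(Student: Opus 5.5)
We will prove the statement with an explicit modular construction in the spirit of \Cref{thm:2b-neq3cd}. The idea is to let pendant vertices control $g$, a clique gadget control $eg$, parallel paths control $seg$, and twins on a tail path control $meg$. Pendant vertices lie in every geodetic, edge-geodetic, strong edge-geodetic and MEG-set by \cref{simplicial2023}. So the construction starts from a hub and arranges that the pendant vertices alone form a minimum geodetic set.

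\emph{Base.} We take a central vertex $x_0$ and attach $a-1$ pendant vertices $p_1,\dots,p_{a-1}$. One further branch carries the remaining gadgets and ends in a vertex $y$ that plays the role of the $y$ in \Cref{thm:2b-neq3cd}. Since $a\ge 3$ there are at least two pendants, so $x_0$ is a cut vertex, and by \cref{cut vertex} it never enters a minimal MEG-set.

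\emph{Gadgets between $y$ and $x_0$.}
\begin{itemize}
\item We add $b-a$ paths $y z_i w_i x_0$ and make $W=\{w_1,\dots,w_{b-a}\}$ a clique. This forces all of $W$ into every edge-geodetic set, exactly as in \Cref{thm:2b-neq3cd}.
\item We add $c-b+1$ further paths $y f_i v_i x_0$. A single assigned $y$--$p_j$ path can cover one of them, so $c-b$ extra vertices are needed for $seg$.
\item We add a twin tail $x_0x_1\cdots x_r$ with open twins $x'_{2i}$, using the same parity split for $r$ as in \Cref{thm:2b-neq3cd}. The twins are forced into every MEG-set by \cref{lem:twins}. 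They contribute exactly $d-c$ extra vertices, and the path edges are already covered by the pendant pairs, so $g,eg,seg$ are unaffected.
\end{itemize}
When $b=a$ the $W$ gadget degenerates. A single path $yz_1w_1x_0$ keeps $y$ itself a non-pendant vertex on every geodesic between pendants.

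\emph{Verification, in order.}
\begin{enumerate}
\item $g=a$. The $a-1$ pendants plus the tail end $x_r$ (or $y$) see every vertex on geodesics through $x_0$. The lower bound comes from the forced simplicial vertices together with one more vertex needed beyond the pendants.
\item $eg=b$. The lower bound uses the clique $W$: an edge $w_iw_j$ lies on no geodesic between vertices outside $W$ other than $w_i,w_j$. This gives $a+(b-a)=b$.
\item $seg=c$. The upper bound is the pendants, $W$, $x_r$, and $V\setminus\{v_1\}$. The lower bound is a counting argument: each pair of chosen vertices assigns a single path, which covers at most one edge $yf_i$ among the parallel $f$-paths. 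Each edge $f_iv_i$ not covered by a pendant-to-$x_r$ assignment needs an endpoint of its own gadget. Here we use \Cref{lem:eg-implies-seg} for the forced vertices.
\item $meg=d$. \Cref{segiffmeg} gives the upper bound: we check that the set $\{\text{pendants}\}\cup W\cup V\cup X$ monitors every edge. Since $x_0$ cuts, the $v_i$ and twins monitor through the pendants. For the lower bound we use \Cref{4-cycle} (each $v_i$ lies on induced $2$-paths within $4$-cycles $y f_i v_i x_0 v_j f_j$), \cref{lem:twins}, and \Cref{eg-seg-meg}.
\end{enumerate}

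\emph{Main obstacle.} The hardest step is the lower bound $seg\ge c$ together with preventing the gadgets from interfering with $g$. The worry is that clique vertices $w_i$, or the vertex $y$, might cover the $f$-paths or make a pendant redundant. We would first attempt to fix this by lengthening the tail and the $f$-paths, so that geodesics between $W$ and pendants cannot be rerouted through the $f$-paths. Then we would check the degenerate boundary cases $b=a$, $c=b$ and $d=c$ separately, as in case (iii) of \Cref{thm:2b-neq3cd}.
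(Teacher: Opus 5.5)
Your construction does not realize the parameters, and the failures are structural rather than boundary cases.

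\textbf{The geodetic number is one too large.} In your graph $x_0$ is a cut vertex. The components of $G-x_0$ are the $a-1$ pendants, the block $B=\{y\}\cup\{z_i,w_i\}\cup\{f_i,v_i\}$, and, whenever $d>c$, the tail. A geodesic through a vertex of a component $C$ with both ends outside $C$ would pass through $x_0$ twice. So every geodetic set meets every component of $G-x_0$, and $g\ge (a-1)+2=a+1$ as soon as $d>c$. Your step~1 (``the $a-1$ pendants plus $x_r$ (or $y$)'') is exactly where this breaks: pendant--$x_r$ geodesics never enter $B$, and pendant--$y$ geodesics never enter the tail. For the same reason the tail edges are not ``already covered by the pendant pairs'', whose only geodesics are $p_ix_0p_j$.

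\textbf{The $seg$ gadget cannot work in this architecture.} Your count ``a single assigned $y$--$p_j$ path can cover one of them'' overlooks that each of the $a-1$ pendants, and also $x_r$, has its own assigned path to $y$. These can be routed through distinct paths $yf_iv_ix_0$, so up to $a$ of the $c-b+1$ parallel paths are covered for free. Concretely, take $(a,b,c,d)=(3,5,12,13)$, so $r=1$ and the tail is just $x_0x_1$. The set $\{p_1,p_2,x_1,y,w_1,w_2,v_4,\dots,v_8\}$, with $\{p_1,y\},\{p_2,y\},\{x_1,y\}$ routed through $v_1,v_2,v_3$, is a strong edge-geodetic set of size $11<c$; meanwhile $g\ge 4>a$.

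Lengthening the tail or the $f$-paths does not help, because the obstruction is the number of forced pairs straddling the gadget, not path lengths. The gadget works in \Cref{thm:2b-neq3cd} only because the forced set there is $\{y,x_r\}$, a single straddling pair. The paper's construction is designed around this point. Its pendants $U$ hang from the clique vertex $w_{b-a+2}$, whose geodesics to the tail run through $y$ and then $x$. Its parallel paths are the $2$-paths $zv_ix$, so among the forced vertices only the pair $\{z,x_r\}$ has geodesics through them.

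\textbf{Smaller defects.}
\begin{enumerate}
\item For $b=a+1$ your $W$ is a single vertex with no clique edges, so it forces nothing. This is the same degeneracy that excludes $b=3$ in \Cref{thm:2b-neq3cd}.
\item Your ``$4$-cycle'' $yf_iv_ix_0v_jf_j$ is a $6$-cycle. The path $f_iv_ix_0$ lies in no $4$-cycle, since $v_i$ is the only common neighbour of $f_i$ and $x_0$, so \Cref{4-cycle} does not apply.
\item Your proposed MEG-set omits $y$. No geodesic between pendants, $W$, $V$ and $X$ uses the edges $yf_i$ or $yz_i$, because all such geodesics pass through $x_0$ and stop at distance one from it inside $B$.
\end{enumerate}
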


\begin{proof}

\begin{figure}[ht]
\centering
\begin{tikzpicture}[
  scale=0.8,
  vertex/.style={circle,draw,fill=black,inner sep=1pt,minimum size=1.8mm},
  small/.style={circle,draw,fill=black,inner sep=1pt,minimum size=1.8mm},
  every edge/.style={thick}
  ]

\node[vertex,label=above:$y$] (y) at (90:1) {};
\node[vertex,label=right:$w_1$] (w1) at (162:1.2) {};
\node[vertex,label=below left:$z$] (z) at (234:1.2) {};
\node[vertex,label=above left:$v_1$] (v1) at (-54:1.2) {};
\node[vertex,label=above right:$x$] (x) at (18:1.2) {};

\draw (w1)--(y)--(x)--(v1)--(z)--(w1);

\node[vertex,label=below:$w_2$] (w2) at ($(w1)+(-0.9,0)$) {};
\node[vertex,label=below:$w_3$] (w3) at ($(w2)+(-0.9,0)$) {};
\node[vertex,label=above:$w_{b-a+1}$] (w4) at ($(w3)+(-1.2,0)$) {};

\foreach \w in {w1,w2,w3,w4}{
  \draw (\w)--(y);
  \draw (\w)--(z);
}

\draw (w1)--(w2)--(w3);

\node[vertex,label=above left:$w_{b-a+2}$] (w5) at ($(w4)+(-1,0.1)$) {};
\node[vertex,label=above:$u_1$] (u1) at ($(w4)+(1.5,2)$) {};
\node[vertex,label=above:$u_2$] (u2) at ($(u1)+(-0.5,0)$) {};
\node[vertex,label=above:$u_3$] (u3) at ($(u2)+(-0.5,0)$) {};
\node[vertex,label=above:$u_{a-3}$] (u4) at ($(u3)+(-1.5,0)$) {};
\node (u5) at ($(u3)+(-0.3,0)$) {};
\node (u6) at ($(u5)+(-1,0)$) {};
\draw [dotted, thick] (u5)--(u6);

\draw[dashed,bend right=25] (w1) to (w5);
\draw[dashed,bend left=35] (w2) to (w5);
\draw[dashed,bend left=35] (w3) to (w5);
\draw [dashed](w4) to (w5);
\draw[dashed,bend right=25] (y) to (w5);
\draw[dashed,bend left=35] (z) to (w5);

\draw[dashed] (u1) to (w5);
\draw[dashed] (u2) to (w5);
\draw[dashed] (u3) to (w5);
\draw [dashed](u4) to (w5);

\draw[dashed,bend right=25] (w1) to (w4);
\draw[dashed,bend left=35] (w2) to (w4);
\draw[dashed] (w3) to (w4);
\draw[bend left=25] (w3) to (w1);
\node[small,label=right:$v_2$] (v2) at ($(z)+(1.8,-0.2)$) {};
\node[small,label=right:$v_3$] (v3) at ($(z)+(2.5,-0.6)$) {};
\node[small,label=right:$v_4$] (v4) at ($(z)+(3.6,-1.5)$) {};
\node (v5) at ($(z)+(2.7,-0.7)$) {};
\node (v6) at ($(z)+(3.3,-1.3)$) {};

\foreach \v in {v1,v2,v3,v4}{
  \draw (z)--(\v);
  \draw (x)--(\v);
  \draw [dotted, thick] (v5)--(v6);
}

\node[small,label=below:$x_1$] (x1) at ($(x)+(0.8,0)$) {};
\draw (x)--(x1);

\node[small,label=below:$x_2$] (x2) at ($(x1)+(0.8,0)$) {};
\node[small,label=below:$x_3$] (x3) at ($(x2)+(0.8,0)$) {};
\node[small,label=below:$x_4$] (x4) at ($(x3)+(0.8,0)$) {};
\node[small,label=below:$x_5$] (x5) at ($(x4)+(0.8,0)$) {};
\node[small,label=below:$x_{r-3}$] (x6) at ($(x5)+(1.5,0)$) {};
\node[small,label=below:$x_{r-2}$] (x7) at ($(x6)+(0.8,0)$) {};
\node[small,label=below:$x_{r-1}$] (x8) at ($(x7)+(0.8,0)$) {};
\node[small,label=below:$x_{r}$] (x9) at ($(x8)+(0.8,0)$) {};
\node[small,label=above left:$x'_{r-2}$] (x11) at ($(x7)+(0,0.8)$) {};
\node[small,label=above:$x'_{r-1}$] (x12) at ($(x8)+(0,0.8)$) {};

\draw (x)--(x1)--(x2)--(x3)--(x4)--(x5);
\draw [dashed](x5)--(x6);
\draw (x6)--(x7)--(x8)--(x9);
\draw (x9)--(x12)--(x11)--(x6);

\node[small,label=above:$x'_2$] (x2p) at ($(x2)+(0,0.8)$) {};
\node[small,label=above:$x'_4$] (x4p) at ($(x4)+(0,0.8)$) {};
\draw (x1)--(x2p)--(x3);
\draw (x3)--(x4p)--(x5);

\end{tikzpicture}
\caption{$G_{a,b,c,d}$ showing the clique $W$ and extension $V$ when $d-c$ is odd.}
\end{figure}
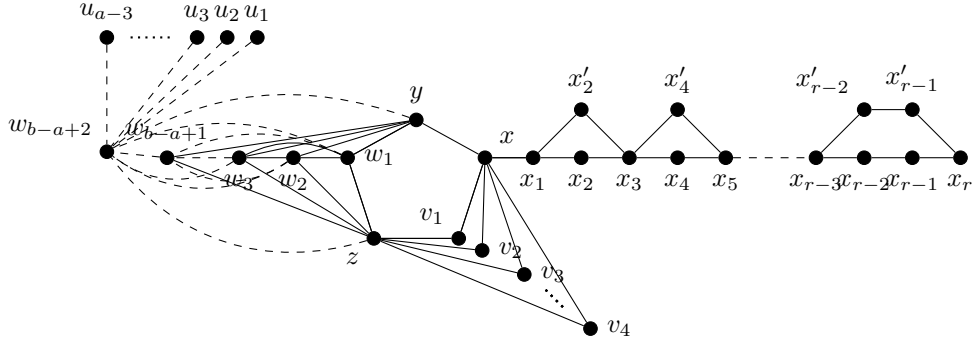


\begin{figure}[ht]
\centering
\begin{tikzpicture}[
  scale=0.8,
  vertex/.style={circle,draw,fill=black,inner sep=1pt,minimum size=1.8mm},
  small/.style={circle,draw,fill=black,inner sep=1pt,minimum size=1.8mm},
  every edge/.style={thick}
  ]

\node[vertex,label=above:$y$] (y) at (90:1) {};
\node[vertex,label=right:$w_1$] (w1) at (162:1.2) {};
\node[vertex,label=below left:$z$] (z) at (234:1.2) {};
\node[vertex,label=above left:$v_1$] (v1) at (-54:1.2) {};
\node[vertex,label=above right:$x$] (x) at (18:1.2) {};

\draw (w1)--(y)--(x)--(v1)--(z)--(w1);

\node[vertex,label=below:$w_2$] (w2) at ($(w1)+(-0.9,0)$) {};
\node[vertex,label=below:$w_3$] (w3) at ($(w2)+(-0.9,0)$) {};
\node[vertex,label=above:$w_{b-a+1}$] (w4) at ($(w3)+(-1.2,0)$) {};

\foreach \w in {w1,w2,w3,w4}{
  \draw (\w)--(y);
  \draw (\w)--(z);
}

\draw (w1)--(w2)--(w3);

\node[vertex,label=above left:$w_{b-a+2}$] (w5) at ($(w4)+(-1,0.1)$) {};
\node[vertex,label=above:$u_1$] (u1) at ($(w4)+(1.5,2)$) {};
\node[vertex,label=above:$u_2$] (u2) at ($(u1)+(-0.5,0)$) {};
\node[vertex,label=above:$u_3$] (u3) at ($(u2)+(-0.5,0)$) {};
\node[vertex,label=above:$u_{a-3}$] (u4) at ($(u3)+(-1.5,0)$) {};
\node (u5) at ($(u3)+(-0.3,0)$) {};
\node (u6) at ($(u5)+(-1,0)$) {};
\draw [dotted, thick] (u5)--(u6);

\draw[dashed,bend right=25] (w1) to (w5);
\draw[dashed,bend left=35] (w2) to (w5);
\draw[dashed,bend left=35] (w3) to (w5);
\draw [dashed](w4) to (w5);
\draw[dashed,bend right=25] (y) to (w5);
\draw[dashed,bend left=35] (z) to (w5);

\draw[dashed] (u1) to (w5);
\draw[dashed] (u2) to (w5);
\draw[dashed] (u3) to (w5);
\draw [dashed](u4) to (w5);

\draw[dashed,bend right=25] (w1) to (w4);
\draw[dashed,bend left=35] (w2) to (w4);
\draw[dashed] (w3) to (w4);
\draw[bend left=25] (w3) to (w1);
\node[small,label=right:$v_2$] (v2) at ($(z)+(1.8,-0.2)$) {};
\node[small,label=right:$v_3$] (v3) at ($(z)+(2.5,-0.6)$) {};
\node[small,label=right:$v_4$] (v4) at ($(z)+(3.6,-1.5)$) {};
\node (v5) at ($(z)+(2.7,-0.7)$) {};
\node (v6) at ($(z)+(3.3,-1.3)$) {};

\foreach \v in {v1,v2,v3,v4}{
  \draw (z)--(\v);
  \draw (x)--(\v);
  \draw [dotted, thick] (v5)--(v6);
}

\node[small,label=below:$x_1$] (x1) at ($(x)+(0.8,0)$) {};
\draw (x)--(x1);

\node[small,label=below:$x_2$] (x2) at ($(x1)+(0.8,0)$) {};
\node[small,label=below:$x_3$] (x3) at ($(x2)+(0.8,0)$) {};
\node[small,label=below:$x_4$] (x4) at ($(x3)+(0.8,0)$) {};
\node[small,label=below:$x_5$] (x5) at ($(x4)+(0.8,0)$) {};
\node[small,label=below:$x_{r-2}$] (x6) at ($(x6)+(0.8,0)$) {};
\node[small,label=below:$x_{r-1}$] (x7) at ($(x7)+(0.8,0)$) {};
\node[small,label=below:$x_{r}$] (x8) at ($(x8)+(0.8,0)$) {};
\node[small,label=above:$x'_{r-1}$] (x9) at ($(x7)+(0,0.8)$) {};

\draw (x)--(x1)--(x2)--(x3)--(x4)--(x5);
\draw [dashed](x5)--(x6);
\draw (x6)--(x7)--(x8);
\draw (x6)--(x9)--(x8);

\node[small,label=above:$x'_2$] (x2p) at ($(x2)+(0,0.8)$) {};
\node[small,label=above:$x'_4$] (x4p) at ($(x4)+(0,0.8)$) {};
\draw (x1)--(x2p)--(x3);
\draw (x3)--(x4p)--(x5);

\end{tikzpicture}
\caption{$G_{a,b,c,d}$ showing the clique $W$ and extension $V$ when $d-c$ is even.}
\end{figure}
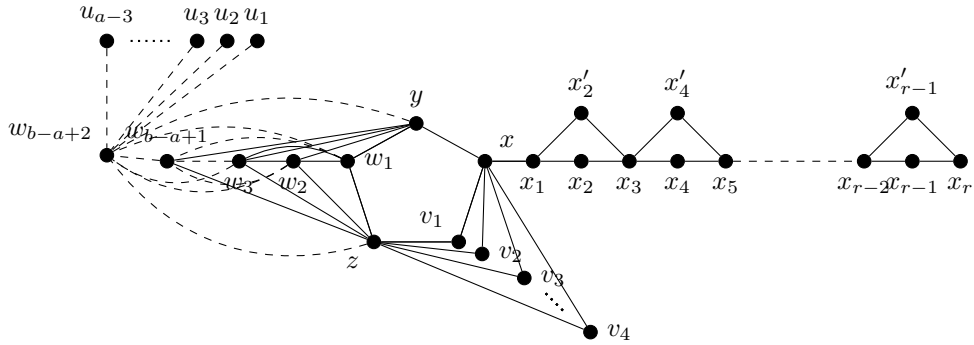

 We present a unified construction of \(G_{a,b,c,d}\) that achieves the desired parameter values.

 Begin the construction with a 5-cycle whose  label of vertices (in counterclockwise order)
\(x,\, y,\, w_1,\, z,\, v_1,\, x.
\)

 Next, we add either $|W|=b-a$ or $|W'|=b-a+1$ new vertices as either
\[
W=\{w_2,w_3,\dots,w_{b-a+1}\},\ or\ \ W'= \{w_2,w_3,\dots,w_{b-a+2}\}
\]
and make either $W\cup\{w_1\}$ or $W'\cup\{w_1\}$ into a clique (if $b=a$ then $W=\varnothing$ and no clique edges are added). Place these clique vertices in the figure between $y$ and $z$ so that each $w_i$ is adjacent to both $y$ and $z$. In this step, if $a=3$, then we add $W$ and otherwise, we add $W'$.

 In the third step of construction, add $|V|=c-b$ vertices
\[
V=\{v_2,\dots,v_{c-b+1}\}.
\]
and for each $v_i\in V$ insert the two edges \(z v_i\) and \(v_i x\).

 In the next step, add $|U|=a-3$ pendent vertices
\[
U=\{u_1,u_2,\dots,u_{a-3}\},
\]
and attach them all to the vertex either \(w_{b-a+1}\) in $W$ or $w_{b-a+2}$ in $W'$ (if $a=3$ then there is no pendent vertex in the construction graph). 

 Now, we add one more pendent vertex $x_r$ attached to the vertex $x$ as an $xx_r$ edge. In the last step of construction, we subdivide the $xx_r$ edges by adding $r-1$ vertices $x_1,x_2,\cdots, x_{r-1}$ where 
\[
r \;=\; \begin{cases}
d-c+3, &\text{if } d-c \text{ is odd},\\[1mm]
d-c+1, &\text{if } d-c \text{ is even},
\end{cases}
\]
and then add the open twin vertices $x'_{2i}$ on each $x_{2i}$ as follows:

 If $(d-c)$ is odd, the twin vertices $x'_{2i}$ for $i=\{1,2,\cdots, \lfloor \frac{(r-3)}{2}\rfloor\} $, let \[
X=\{x_{2i},x'_{2i}: 1\leq i\leq  \lfloor \frac{(r-3)}{2}\rfloor\} \cup \{x_{r-1},x'_{r-1}\},
\]
and add the parallel path from $x_{r-3}$ to $x_r$ with distance $3$, then we can see it as a $6$ cycle form with the labeled $x_{r-3},x_{r-2},x_{r-1},x_r,x'_{r-1},x'_{r-2},x_{r-3}$.  

 If \((d-c)\) is even, then $x'_{2i}$ for $i=\{1,2,\cdots, \lfloor \frac{(r-1)}{2}\rfloor\}$, let \[
X'=\{x_{2i},x'_{2i}: 1\leq i\leq\lfloor \frac{r-1}{2}\rfloor\}\cup \{x_r\},
\]
In this case, we note that if $d-c=0$, then $X'= \emptyset $ and $r=1$.

 This completes the construction of the graph $G_{a,b,c,d}$. 
After establishing this construction completely, we now verify the four parameters one by one.

First, we verify the geodetic number by considering separately the cases \(a=3\) and \(a\ge4\). 

 If \(a=3\), there are no pendent vertices on the \(w_{b-a+1}\). Define
\[
S_g \;:=\; \{y,\,z,\,x_r\}.
\]
We claim that \(S_g\) is a geodetic set of size \(3\).
Since every vertex \(w_i\) is adjacent to both \(y\) and \(z\), hence each \(w_i\) lies on every shortest path between \(y\) and \(z\).
By construction, the shortest paths \(z\!-\!x_r\) and \(y\!-\!x_r\) together cover all vertices and edges along the branches from $x$ to $x_r$.
Moreover, every vertex \(v_j\) with \(1\le j\le c-b+1\) is adjacent to both \(z\) and \(x\), so each \(v_j\) lies on a shortest path between \(z\) and \(x_r\).
Consequently, every vertex of \(G_{3,b,c,d}\) is contained in some shortest path between any two vertices of \(S_g\), implying \(g(G_{3,b,c,d}) \leq 3\).

 Since no nontrivial connected graph satisfies \(g(G)=1\), and any two vertices, such as \(w_i\) and \(x_r\) fail to cover all vertices through their shortest paths in \(G_{3,b,c,d}\), it follows that \(g(G_{3,b,c,d})\ge 3\). Combining this with the upper bound established above, we obtain \(g(G_{3,b,c,d})=3\).

 If \(a \ge 4\), the construction introduces the pendent vertices \(U = \{u_1, u_2, \ldots, u_{a-3}\}\) are attached to \(w_{b-a+2}\).  
Let
\[
S'_g := \{y,\, z,\, x_r\} \cup U.
\]
 Due to Lemma~\ref{simplicial2023}, $U$ must belong to all four parameter sets. Hence, each $u_i$ lies on a unique shortest paths ending at $w_b-a+2$ which itself lies on the shortest path between $y$ and $z$.

 As before, the paths \(y\text{--}x_r\) and \(z\text{--}x_r\) cover all vertices between \(x\) and \(x_r\). Removing any vertex from \(S'_g\) leaves at least one pendent vertex or internal vertex uncovered, so \(S'_g\) is minimal. Hence, \[g(G_{a,b,c,d})= |S'_g| = 3+ a-3=a.\]

 From the preceding arguments, all pendent vertices \(U\) and the terminal vertex \(x_r\) must belong to every edge-geodetic set.
Every shortest path between each pendent vertices and \(x_r\) includes the edges of the paths \(u_i w_{b-a+2} y x \cdots x_r\), thus covering all edges along the branches from \(y\) and \(x\) to \(x_r\).
However, these vertices alone do not cover the edges internal to the clique \(W\) or those forming as \(y w_i z v_j x\) for \(1 \le i \le b-a+1\).  
Therefore, the vertices \(w_i\) (\(1\leq i\leq b-a+1\)) must also belong to every edge-geodetic set.
Additionally, since \(z\) is required to cover the edges incident to each \(v_j\), we must include \(z\) as well.
Consequently, the edge-geodetic set of \(G_{a,b,c,d}\) can be taken as
\[
S_{eg} = U \cup (W \setminus \{w_{b-a+2}\}) \cup \{w_1, z, x_r\},
\]
Hence,
\[
eg(G_{a,b,c,d})=|S_{eg}| = (a-3) + (b-a) + 3 = b.
\]
If \(a=3\), there are no pendent vertices and then $w_i$ with $1\leq i\leq b-a+1$ must belong to the edge-geodetic set. Therefore the edge-geodetic set reduces to
\[
S'_{eg} = (W \cup \{w_1\}) \cup \{z, x_r\},
\]
which again satisfies
\[
eg(G_{3,b,c,d})=|S'_{eg}| = (b-a)+1+2 = b.
\]

 We now determine the strong edge-geodetic number of \(G_{a,b,c,d}\).

 Let \(S_{seg}\) be a strong edge-geodetic set of \(G_{a,b,c,d}\).  
As established in our construction, we note that all vertices of the clique \(W\cup \{w_1\}\) must belong to any strong edge-geodetic set to cover the edges of \(W\cup \{w_1\}\) when \(a=3\). Indeed, to monitor the edges in \(U\cup W'\cup \{w_1\}\) when \(a\geq 4\), \(U\cup (W'\setminus \{w_{b-a+2}\})\cup \{w_1\}\) must belong to any strong edge-geodetic set \(S_{seg}\).
Together with \(U \cup \{x_r\}\), these vertices cover all pendent branches and the edges incident to the clique.  
However, each edges incident to vertices in \(V = \{v_1, \dots, v_{c-b+1}\}\) adjacent to both \(z\) and \(x\) are not yet covered by above vertices even though the vertex $z$ is including in \(S_{seg}\) that can cover one vertex, say $v_1$ between $z$ and $x$. Hence, to ensure coverage, we must include all vertices of \(V\).  
Hence, the strong edge-geodetic set of \(G_{a,b,c,d}\) is:

If $a=3$,
\begin{align*}
    S_{seg} = (W\cup \{w_1\})\cup \{z,x_r\} \cup V,
\end{align*}
and the strong edge-geodetic number is
\[seg(G_{3,b,c,d})=|S_{seg}| = |W|+|\{w_1,z,x_r\}| + |V| = b-a+3 + c-b = c.\]
Otherwise, \[S_{seg} =U\cup (W'\setminus \{w_{b-a+2}\})\cup\{x_r\} \cup V,\]
and the strong edge-geodetic number is
\begin{align*}
seg(G_{a\geq 4,b,c,d})&=|S_{seg}|= |U|+ |(W'\setminus \{w_{b-a+2}\}|+|\{w_1,z,x_r\}| + |V|  \\
&= a-3+b-a+3 + c-b = c.
\end{align*}
Therefore,
\[
seg(G_{a,b,c,d}) = c.
\]

 Finally, we determine the monitoring edge-geodetic number of \(G_{a,b,c,d}\).  
From the construction and by Lemma~\ref{simplicial2023}, all pendent vertices in \(U\) must belong to any MEG-set. Then, to monitor all edges of the clique $W$, the vertices of the clique either $W\cup \{w_1\}$ or $(W'\setminus \{w_{b-a+2}\}) \cup \{w_1\}$ must belong to any MEG-set (Note that $w_{b-a+2}$ is a cut vertex) . Since each vertex $v_i$ of $V\cup \{v_1\}$ is an induced $2$-path $zv_ix$ form which is a part of $C_4$, $v_i$ must belong to MEG-set by Theorem~\ref{4-cycle}.
Moreover, by Lemma~\ref{lem:twins}, every pair of open twins in either \(X\setminus \{x_{r-1},x'_{r-1}\} \) or \(X'\setminus \{x_r\}\) must also be contained in every MEG-set. 
Now, we verify for the remaining edges by considering the following:

If \(d-c\) is even, we have \(r = d-c+1\). Since $X'\setminus \{x_r\}$ belongs to every MEG-set, $x_{r-1}$ and $x'_{r-1}$ are also belong to the every MEG-set of $G_{a,b,c,d}$. However, $x_{r-1}x_r$ and $x'_{r-1}x_r$ edges still remain to be monitored. Since $x'_{r-1}x_rx_{r-1}$ is an induced $2$-path and it is a part of $C_4$, $x_r$ must be inculded in every MEG-set due to Theorem~\ref{4-cycle}. Hence, the MEG-set of $G_{a,b,c,d}$ for $d-c=even$ is either
\[(W\cup \{w_1\})\cup (V\cup \{v_1\})\cup X' \ \ \ \ \ \ or\ \ \ \ \ \ \  U\cup (W\cup \{w_1\})\cup (V\cup \{v_1\})\cup X'  \] 
where \( |X'| = 2\frac{r-1}{2} + 1 = 2\frac{d-c}{2} + 1.\)
\begin{align*}
meg(G_{3,b,c,d})& = |(W\cup \{w_1\})|+(V\cup \{v_1\})|+ |X'|\\
&=(b-3+1)+(c-b+1)+ 2\ \frac{d-c}{2} + 1=d \  \ and\\
    meg(G_{a\geq 4,b,c,d})&=|U|+|(W'\setminus w_{b-a+2})\cup \{w_1\}|+(V\cup \{v_1\})|+ |X'|\\
    &=(a-3)+(b-a+1)+(c-b+1)+2\frac{d-c}{2}+1 =d.
\end{align*}

 If \(d-c\) is odd, we have \(r = d-c+3\). Since $X\setminus \{x_{r-1},x'_{r-1}\}$ belong to every MEG-set, $x_{r-4}$ and $x'_{r-4}$ are also belong to the every MEG-set of $G_{a,b,c,d}$. However, the edges $x_{r-4}x_{r-3}$, $x_{r-3}x'_{r-4}$ and the path $x_{r-3}x_{r-2}x_{r-1}x_rx'_{r-1}x'_{r-2}x_{r-3}$ in $C_6$ form are not monitored yet. As per construction, if we add only one vertex of $C_6$ form, it cannot monitor some edges. Hence, we note that the vertices $x_{r-1}$ and $x'_{r-1}$ must be in the MEG-set of $G_{a,b,c,d}$. Then, all remaining edges will be covered. Therefore, $x_{r-1}$ and $x'_{r-1}$ must be in the MEG-set of  $G_{a,b,c,d}$. Hence, the MEG-set  $G_{a,b,c,d}$ for $d-c=odd$ is either
\[(W\cup \{w_1\})\cup (V\cup \{v_1\})\cup X \ \ \ \ \ \ or\ \ \ \ \ \ \ U\cup (W'\cup \{w_1\})\cup (V\cup \{v_1\})\cup X\]
where \( |X| =2\ \frac{r-4}{2}+ 2 = 2\ \frac{d-c-1}{2} +2.\)
\begin{align*}
meg(G_{3,b,c,d})& = |(W\cup \{w_1\})|+(V\cup \{v_1\})|+ |X|\\
&=(b-3+1)+(c-b+1)+ (d-c+1)=d \  \ and\\
    meg(G_{a\geq 4,b,c,d})&=|U|+|(W'\setminus w_{b-a+2})\cup \{w_1\}|+(V\cup \{v_1\})|+ |X|\\
    &=(a-3)+(b-a+1)+(c-b+1)+(d-c+1) =d.
\end{align*}

\noindent This completes the proof.
\end{proof}


 Having established the existence of graphs realizing arbitrary admissible
parameter quadruples and described an explicit construction for
$G_{a,b,c,d}$, we now briefly analyze the size and computational complexity
of the construction.

\subsubsection{Complexity of the construction}

Let $G_{a,b,c,d}$ be the graph constructed in Theorem~\ref{thm:abcd-construction}. 
The number of vertices and edges in $G_{a,b,c,d}$ grows linearly with $d$.  

Starting from the base graph realizing $g(G)=a$, we introduce three types of gadgets:
\textbf{eg-gadget}, \textbf{seg-gadget}, and \textbf{meg-gadget}, 
each of which contributes a constant number of vertices and edges per increment of the corresponding parameter.  
The necessary gadgets can be attached sequentially to achieve the desired parameters $(b, c, d)$.

Since adding each gadget requires $O(1)$ operations and at most $d$ gadgets are needed, 
the overall construction can be completed in $O(d)$ steps.  
Therefore, the total size of the graph satisfies
\[
|V(G_{a,b,c,d})| = O(d)
\]
and which we summarize in the following corollary.

\begin{corollary}
Given integers $a \le b \le c \le d$, a graph $G_{a,b,c,d}$ realizing
$(g,eg,seg,meg)=(a,b,c,d)$ can be constructed on $O(d)$.
\end{corollary}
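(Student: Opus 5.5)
The corollary follows by counting the vertices and edges of the graphs already built in the proofs of Theorems~\ref{23cd}, \ref{thm:2b-neq3cd} and \ref{thm:abcd-construction}. Theorems~\ref{thm:no_222d} and~\ref{thm:no_233d} show that the quadruples $(2,2,2,d)$ with $d>2$ and $(2,3,3,d)$ are not realizable, so I read the statement as ranging over the admissible quadruples only. For each of those, the plan is to bound $|V(G_{a,b,c,d})|$ and $|E(G_{a,b,c,d})|$ by $O(d)$ and to observe that the graph can be written down in time linear in its size.

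The count will proceed family by family.
\begin{itemize}
\item \emph{Theorem~\ref{thm:abcd-construction}} ($a\ge 3$). The $5$-cycle contributes $O(1)$, the pendent set $U$ contributes $a-3$, the sets $W$ or $W'$ contribute at most $b-a+1$, and $V$ contributes $c-b$. The subdivided path $x_1,\dots,x_r$ has $r\le d-c+3$, and its twins and the $C_6$ add at most $r$ more vertices. The vertex total is therefore $O(a+(b-a)+(c-b)+(d-c))=O(d)$.
\item \emph{Theorem~\ref{thm:2b-neq3cd}} ($g=2$, $b\ne 3$). There are $b-2$ paths $yz_iw_ix_0$, $c-b+1$ paths $yf_iv_ix_0$, and the tail $x_1,\dots,x_r$ with twins, where $r\le d-c+2$. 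This again gives $O(d)$ vertices.
\item \emph{Theorem~\ref{23cd}} ($g=2$, $b=3$). There are $c-2$ parallel paths, each of length about $r+4$ with $r\le d-c+1$, which gives $\Theta(c(d-c))$ vertices. This can be quadratic in $d$, not linear.
\end{itemize}

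The edge count is the main obstacle. The difficulty is the clique $W\cup\{w_1\}$ (or $W'\cup\{w_1\}$), which carries $\Theta((b-a)^2)$ edges and so is not linear in $d$. There are two honest options.
\begin{itemize}
\item Read ``constructed on $O(d)$'' as $O(d)$ vertices, with construction time linear in $|V|+|E|$, which is $O(d^2)$ in the worst case.
\item Replace the clique by a linear-size gadget that still forces each $w_i$ into every edge-geodetic set. The natural candidate is to make each $w_i$ simplicial in a small private triangle, so that Lemma~\ref{simplicial2023} applies. One would then have to re-verify $eg$, $seg$ and $meg$ for the modified graph.
\end{itemize}

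For Theorem~\ref{23cd}, the fix I would try first is to put the length-$r$ extension only on row $0$ (where the twins $X$ live) and shorten the other $c-3$ rows to constant length. This would have to be checked against that theorem's argument for the lower bound $seg\ge c$.

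I expect the corollary as stated to hold only for the vertex count, and for the edge count only after modifying the clique gadget. I would state precisely which version is being proved.
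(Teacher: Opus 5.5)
Your counts are accurate, and they are more careful than the paper's own proof, which counts nothing. It only asserts that $G_{a,b,c,d}$ arises from a base graph by attaching constant-size gadgets, each raising one parameter by one. Your numbers show that this assertion is false for the paper's graphs.
In Theorem~\ref{thm:abcd-construction}, raising $b$ by one adds a vertex to the clique $W\cup\{w_1\}$ (or $W'\cup\{w_1\}$) and costs $b-a+O(1)$ new edges. Already $a=3$, $b=c=d$ gives $\Theta(d^2)$ edges. The clique $W$ of Theorem~\ref{thm:2b-neq3cd} has $\binom{b-2}{2}$ edges and the same defect, which your edge discussion does not mention.
In Theorem~\ref{23cd}, each unit of $c$ contributes a whole row of $r+5$ vertices. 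That graph therefore has $(c-2)(r+5)=\Theta(d^2)$ vertices when $c$ is about $d/2$. The paper's complexity discussion silently restricts to the graph of Theorem~\ref{thm:abcd-construction}, even though the corollary is stated for all quadruples.
So what the constructions actually support is your first option, and only for the graphs of Theorems~\ref{thm:abcd-construction} and~\ref{thm:2b-neq3cd}: $O(d)$ vertices, but up to $\Theta(d^2)$ edges and hence $\Theta(d^2)$ time to write the graph down. Even this weakened version says nothing about the family $(2,3,c,d)$. As written, your proposal does not prove the corollary, in any reading, for all admissible quadruples.

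The repairs you sketch would not close this gap as stated.

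\emph{The clique.} Simplicial vertices cannot replace it. A simplicial vertex is never an interior vertex of a shortest path, since its two neighbours on the path would be adjacent. So it lies in every \emph{geodetic} set, not merely in every edge-geodetic set. Vertices forced this way raise $g$ together with $eg$ and cannot create the gap $eg-g=b-a$. The clique works because its vertices are pairwise closed twins lying on shortest $y$--$z$ paths, which makes them free for $g$. At the same time, an edge $ww'$ between closed twins lies on a shortest $s$--$t$ path only if $\{s,t\}=\{w,w'\}$. A sparse substitute must keep that mechanism. In Theorem~\ref{thm:abcd-construction}, for instance, one could use disjoint closed-twin pairs (plus one triangle if the count is odd), each joined to $y$ and $z$; $eg$, $seg$ and $meg$ would then have to be re-verified.

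\emph{The rows of Theorem~\ref{23cd}.} Shortening rows $1,\dots,c-3$ endangers more than the bound $seg\ge c$. The pairs $\{x_0,v_{(c-3)r}\}$ and $\{v_{0r},x_{c-3}\}$ cover every row only because the ``staircase'' paths through different rows all have the same length. Once row $0$ is longer than the others, the vertices at its far end lie on no shortest path between these pairs, so $g=2$ would need a new argument.

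Your closing claim that the vertex version holds depends on this row-shortening fix. The missing ingredients are therefore a genuinely linear-size construction for $(2,3,c,d)$ and, for the edge version, a verified sparse replacement for the cliques.
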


\begin{proof}
The construction of $G_{a,b,c,d}$ starts from a fixed base graph realizing $g(G)=a$.
To achieve the target values of $eg(G)=b$, $seg(G)=c$, and $meg(G)=d$, we sequentially
attach a collection of gadgets, each of which increases exactly one of the parameters
by $1$ while preserving the previously achieved values.

Each gadget consists of a constant number of vertices and edges and can be attached
to the existing graph using a constant number of operations.
Since at most $d$ gadgets are required to reach the largest parameter $meg(G)=d$,
the total number of construction steps is $O(d)$
and the entire construction can be performed on $O(d)$.
\end{proof}

\section{Conclusion}\label{sec:conclusion}

In this article, we have conducted a systematic study of connected graphs 
with prescribed values of four related parameters: the geodetic number $g(G)$, 
the edge-geodetic number $eg(G)$, the strong edge-geodetic number $seg(G)$, 
and the monitoring edge-geodetic number $meg(G)$. 

We first identified combinations of parameters that are impossible 
to realize, providing a series of non-existence results for certain 
parameter quadruples. Next, for all admissible quadruples $(a,b,c,d)$, 
we presented explicit constructions of connected graphs $G_{a,b,c,d}$ realizing 
these parameters. Our constructions are flexible and modular, allowing us 
to generate graphs for arbitrary parameter values while maintaining linear 
size and ensuring efficient construction. 

Moreover, we analyzed the computational complexity of our construction, 
demonstrating that $G_{a,b,c,d}$ can be built in $O(d)$ steps, 
with the number of vertices and edges growing linearly with the largest 
parameter $d$. This shows that our approach is not only constructive 
but also practical for generating large graphs with desired properties.

These results complete the characterization of realizable parameter quadruples 
and provide a foundation for further investigations. Future work may include 
studying additional network parameters, exploring extremal properties 
related to graph size and diameter, or extending the analysis to weighted 
or directed graphs.

\medskip

\medskip

\noindent \textbf{Data  availability} This work has no associated data.

\subsection*{Declaration}

\noindent \textbf{Conflict of interest} The authors have no competing interests.

\bibliographystyle{splncs04}
\bibliography{references}

@article{Atici2003,
  title = {On the edge geodetic number of a graph},
  author = {Atici, M.},
  journal = {International Journal of Computer Mathematics},
  volume = {80},
  number = {7},
  pages = {853--861},
  year = {2003}
}

@article{Manuel2017,
  title = {Strong edge geodetic problem in networks},
  author = {Manuel, Paul and Klav{\v{z}}ar, Sandi and Xavier, A. and Arokiaraj, A. and Thomas, E.},
  journal = {Open Mathematics},
  volume = {15},
  number = {1},
  pages = {1225--1235},
  year = {2017}
}

@article{Harary1993,
  title = {The geodetic number of a graph},
  author = {Harary, Frank and Loukakis, E. and Tsouros, C.},
  journal = {Mathematical and Computer Modelling},
  volume = {17},
  number = {11},
  pages = {89--95},
  year = {1993}
}

@inproceedings{foucaud2023monitoring,
  title={Monitoring edge-geodetic sets in graphs},
  author={Foucaud, Florent and Narayanan, Krishna and Ramasubramony Sulochana, Lekshmi},
  booktitle={Conference on Algorithms and Discrete Applied Mathematics},
  pages={245--256},
  year={2023},
  organization={Springer}
}

@article{foucaud2025,
  title={Bounds and extremal graphs for monitoring edge-geodetic sets in graphs},
  author={Foucaud, Florent and Marcille, Clara and Myint, Zin Mar and Sandeep, RB and Sen, Sagnik and Taruni, S},
  journal={Discret. Appl. Math.},
  year={2025}
}

\end{document}